\newcommand\smvee{\raise0.3ex\hbox{$\scriptscriptstyle\vee$}}
\newtheorem{theorem}{Theorem}[section]
\newtheorem{nmtheorem}{Theorem}
\newtheorem{prop}[theorem]{Proposition}
\newtheorem{remark}[theorem]{Remark}
\newtheorem{defn}[theorem]{Definition}
\numberwithin{equation}{section}
\newcounter{para}
\newcommand{\etale}{étale }
\def\C{\mathbb{C}}
\def\Q{\mathbf{Q}}
\def\Qb{\mathbb{Q}}
\def\Ql{\mathbb{Q}_{\ell}}
\def\A{\mathbb{A}}
\def\P{\mathbb{P}}
\def\R{\mathbb{R}}
\def\PSig{\mathbb{P}({\Sigma})}
\def\N{\mathbb{N}}
\def\Tc{\mathcal{T}}
\def\Ic{\mathcal{I}}
\def\1{\mathbb{1}}
\def\cmord{\mathrm{Mor}_{\vec{d}}(C,\PSig)}
\def\pmord{\mathrm{Mor}_{\vec{d}}(\P^1,\PSig)}
\def\sgn{\mathit{sgn}}
\def\pic{\mathrm{Pic}}
\def\deg{\mathit{deg\,}}
\newcommand\Sym{\mathit{Sym}}
\date{}
\DeclareMathOperator{\Shom}{\mathscr{H}\text{\kern -3pt {\calligra\large om}}\,}
\DeclareMathOperator{\Sext}{\mathscr{E}\text{\kern -3pt {\calligra\large xt}}\,}
\def\question{\medbreak
	\global \advance \questionno 1
	\textbf{Problem \the\questionno}.\enspace \ignorespaces}
\newcommand\shorttitle{Curves on toric varieties}
\newcommand\authors{Oishee Banerjee}
\normalfont\fontsize{11}{13}\bfseries}{\thesection}{1em}{}
\title{Moduli of curves on toric varieties\\and their stable cohomology}
\author{Oishee Banerjee}
\begin{document}
	
	\maketitle
	\begin{abstract}
		We prove that the  cohomology of the moduli space of morphisms of a fixed finite degree from a smooth projective curve $C$ of genus $g$ to a complete simplicial toric variety $\PSig$, denoted by the rational polyhedral fan $\Sigma$, stabilizes. As an arithmetic consequence we obtain a resolution of the Batyrev-Manin conjecture for toric varieties over global function fields in all but finitely many characteristics.
	\end{abstract}

\section{Introduction}

Noting that the moduli space of degree $d$ morphisms from a smooth projective curve of genus $g$ to the projective space $\P^n$ (or more generally, to weighted porjective stacks, see \cite{BPS} and the references therein) is well studied so far as (\etale) homological stability is concerned, the next level of generalization is to ask the same question for toric varieties- a question whose arithmetic counterpart is the Batyrev-Manin type conjecture over global function fields.

 To elaborate, fix an algebraically closed field $K$. Let $\Sigma$ be a complete, simplicial rational polyhedral fan in $N\otimes \R\cong \R^r$, $N$ a lattice, and let $M$ be the dual lattice of $N$, their respective basis given by $e_1,\ldots, e_r$, and its dual $e^1,\ldots, e^r$. Let $\mathbb{P}_{\Sigma}$ be the corresponding complete simplicial toric variety, and $\Sigma(1)$ denote the set of rays, $\Sigma(1):=\{\rho_1,\ldots, \rho_n\}$, and $v_{1},\ldots, v_n$ the integer generators of those rays. We denote cones in $\Sigma$ by $\sigma$, the set of all rays in $\sigma$ by $\sigma(1)$ and let $\Sigma_{\mathrm{max}}$ denote the set of maximal cones in $\Sigma$.
A morphism from a smooth projective $C$ of genus $g$ to $\PSig$ corresponds to what we call a \emph{non-degenerate $\Sigma$-collection} which consists of the following data:
\begin{enumerate}[i]
	\item line bundles $L_{i}$ for each $\rho_i \in \Sigma(1)$,
	\item global sections $s_{i}\in H^0(C,L_{i})$ for each $\rho_i \in \Sigma(1)$,
	\item \emph{compatibility isomorphisms:} $\alpha_{e^i}: \bigotimes_{\rho} L^{\langle e^i, v_{j}\rangle} \to \mathcal{O}_C$,
	\item satisfying the \emph{nondegeneracy} condition: suppose $s_{j}^*: L_{j}^{-1} \to \mathcal{O}_C$ denotes the map induced by the section $s_{j}$ for each $\rho_j \in \Sigma(1)$, then there is a surjection of locally free sheaves $$ \bigoplus_{\sigma\in \Sigma_{\mathrm{max}}} \big(\otimes_{\rho\notin \sigma(1)} s^*_{\rho}\big): \bigoplus_{\sigma \in \Sigma_{\mathrm{max}}} \big(\otimes_{\rho\notin \sigma(1)} L_{\rho}^{-1}\big) \to \mathcal{O}_C.$$
	If the last condition of non-degeneracy is not satisfied we simply call it a \emph{$\Sigma$-collection}.
\end{enumerate}
\begin{remark}
	Note our distinction in terminology from what Cox uses- $\Sigma$-collections in \cite{Coxfunctor} are, by definition, non-degenerate. 
\end{remark}In the language of Cox (see \cite{Coxfunctor}) the collection of triples $(L_{\rho}, s_{\rho}, \alpha_u)_{\rho\in \Sigma(1), u\in M}$ satisfying the above conditions is a \emph{$\Sigma$-collection} on $C$ (note that the definition of $\Sigma$-collections holds for any $K$-scheme $X$, not just for a smooth projective curve). 

Let $\vec{d}:= (d_1, \ldots, d_n)\in \N^n$ be a \emph{degree vector}, where $n:= |\Sigma(1)|$ and let $\mathrm{Mor}_{\vec{d}}(C,\PSig)$ denote the moduli space of degree $\vec{d}$ maps from $C$ to $\PSig$ given by $\Sigma$-collections on $C$ such that degree of $L_i = d_i$ (note that by condition iii above, this forces $\sum_i d_i u_i=0$ for the space of morphisms to be non-empty). A \emph{primitive collection} (see \cite{CLS}) of rays in $\Sigma$ is a subset of $\Sigma(1)$ which does not span a cone in $\Sigma$, but every subset of that collection does. $E_1,\ldots, E_t$ be its primitive collections. We now state our main theorem.
\begin{nmtheorem}\label{thm:CPSig}
	Let $C$ be a smooth projective curve of genus $g$, $\PSig$ a complete simplicial  toric variety defined by a rational polyhedral fan $\Sigma$, let $\{\rho_1,\ldots, \rho_n\}$ be its rays, and let $\vec{d}:= (d_1, \ldots, d_n)\in \N^n$ be a degree vector such that $d_i\geq 2g$ for all $i$ and $\sum d_iu_i= 0$.  Let $n_0:=\min\{d_1,\ldots, d_n\}-2g$. Then there exists a second quadrant spectral sequence, which converges to $H^*(\cmord; \Qb)$ an algebra, which has the following description. The $E_2$ term is a bigraded algebra that collapses on $E_2^{-p,q}\Big\vert_{p\leq n_0}$. Furthermore, $E_2^{-p,q}\Big\vert_{p\leq n_0}$ is a quotient of the graded commutative $\Qb$-algebra
$$H^*(J(C);\Qb)^{\otimes (n-r)}\otimes  \Qb[D_1,\ldots, D_n]/ I  \otimes \big(\oplus_{j=1}^{t}\wedge \Qb\{z_j\}\big)\otimes \big(\oplus_{j=1}^{t}\Sym \Qb\{\alpha^j_1,\ldots, \alpha^j_{2g}\}\big),$$ 
where $t$ is the number of primitive collections of $\Sigma$, $H^i(J(C);\Qb)$ has degree $(0,i)$, $D_i$ has degree $(0,2)$ for all $i$, $z_j$ has degree $(-1,2\epsilon_j)$, $\alpha^j_i$ has degree $(-1,2\epsilon_j-1)$ for all $i$, modulo elements of degree $(-i,j)$ with $i>n_0$ and the ideal $I$ is generated by all \begin{enumerate}
	\item $D_{i_1}\cdot\ldots\cdot D_{i_k}$ for $v_{i_1},\ldots, v_{i_k}$ not in a cone of $\Sigma$;
	\item $\sum_{i=1}^{r} \langle v,u_i\rangle$ for $v\in M$;
	\item $\sum_{j=1}^{k} D_{i_1}\ldots \widehat{D_{i_j}}\ldots D_{i_k}$ for every primitive collection $\{u_{i_1}, \ldots, u_{i_k}\}\subset \Sigma(1)$, where $\widehat{D_{i_j}}$ denotes the $j^{th}$ entry removed.
\end{enumerate} 
\noindent  Furthermore this is a spectral sequence of mixed Hodge structures, with $\Qb\{\alpha^j_1,\ldots, \alpha^j_{2g}\}$ carrying a pure Hodge structure of weight $2\epsilon_j-1$ and $\Qb\{z_j\}$ carrying a pure Hodge structure of weight $2\epsilon_j$ for all $1\leq j\leq t$, and $D_i$ is of type $(1,1)$ for all $i$. 
\end{nmtheorem}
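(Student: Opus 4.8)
The plan is to reduce the study of $\cmord$ to a "toric" analogue of the well-understood case $\PSig=\P^n$ by exploiting Cox's description of morphisms as non-degenerate $\Sigma$-collections. First I would observe that forgetting the non-degeneracy condition gives an embedding of $\cmord$ into the space of all $\Sigma$-collections of degree $\vec d$, which is essentially a product over the rays $\rho_i$ of the affine space of sections $H^0(C,L_i)$ fibered over a torsor of $\mathrm{Pic}^{d_i}(C)$, quotiented by the torus $T=\Hom(M,\mathbb G_m)$ encoding the compatibility isomorphisms $\alpha_{e^i}$. Since each $d_i\ge 2g$, the $H^0$-bundles have locally constant rank $d_i+1-g$, so the space of $\Sigma$-collections is (up to a free torus quotient) an affine bundle over $\pic(C)^{n}$ restricted to a subvariety cut out by $\sum d_iu_i=0$, contributing the factor $H^*(J(C);\Qb)^{\otimes(n-r)}$ and the relations (2) above. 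The complement of $\cmord$ inside this space is the degeneracy locus where the map $\bigoplus_{\sigma}(\otimes_{\rho\notin\sigma(1)}s_\rho^*)$ fails to be surjective; the key point is that this degeneracy locus decomposes, via the combinatorics of $\Sigma$, according to which primitive collections $E_1,\dots,E_t$ of sections have a common zero on $C$.

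Next I would set up the spectral sequence. Using a Cousin/Gysin or Leray-type stratification of the space of $\Sigma$-collections by the "defect" of non-degeneracy—equivalently an arc/jet-space or Vassiliev-style filtration by how many common zeros the primitive-collection sections acquire—one obtains a second-quadrant spectral sequence with $E_1$ (or $E_2$) built from the cohomology of the open stratum (the honest morphism space) together with contributions from each degeneracy stratum. Each primitive collection $E_j$ of size, say, $\epsilon_j$ contributes, when its sections vanish simultaneously at a point of $C$, a class $z_j$ of bidegree $(-1,2\epsilon_j)$ together with $2g$ classes $\alpha^j_1,\dots,\alpha^j_{2g}$ of bidegree $(-1,2\epsilon_j-1)$ coming from $H^1(C)=H^1(J(C))$ attached to the location of the common zero; these assemble into the exterior algebra on the $z_j$ and the symmetric algebra on the $\alpha^j_i$. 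The polynomial generators $D_i$ of bidegree $(0,2)$ are the first Chern classes of the universal line bundles $L_i$ (pulled back along the evaluation/universal morphism), and the Stanley–Reisner relations (1) together with the linear relations (2) are exactly the relations in $H^*(\PSig)$ pulled back, while relation (3) is the manifestation on cohomology of the nondegeneracy surjection restricted along each primitive collection. One then has to check that the differentials vanish in the range $p\le n_0=\min d_i-2g$: this is where the hypothesis $d_i\ge 2g$ is used, via a dimension/connectivity estimate showing the $p$-th degeneracy stratum has codimension growing like the excess $d_i-2g$, so no nontrivial differential can hit the stable range. The multiplicative structure follows because all the constructions (evaluation maps, stratifications, Gysin maps) are compatible with the ring structure, and the mixed Hodge structure statement follows by running the entire argument with $\Ql$-sheaves / mixed Hodge modules and noting that the common-zero loci are (products of) symmetric powers of $C$, whose cohomology carries the asserted pure weights $2\epsilon_j-1$ and $2\epsilon_j$.

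The main obstacle, and the technical heart of the argument, is controlling the degeneracy locus of the map $\bigoplus_{\sigma\in\Sigma_{\mathrm{max}}}(\otimes_{\rho\notin\sigma(1)}s^*_\rho)$ precisely enough to (a) identify each stratum's cohomology with the claimed tensor factors indexed by primitive collections, and (b) prove the codimension bound that forces collapse at $E_2$ in the range $p\le n_0$. The subtlety is that non-degeneracy is not simply "no common zero of all sections" but a more refined condition phrased through maximal cones, and translating it into a statement about common zeros of primitive collections (and showing the resulting stratification is sufficiently well-behaved—e.g. that the strata are affine bundles over symmetric powers of $C$) requires the combinatorial input that the primitive collections generate the Stanley–Reisner ideal and control the failure of the cones to cover. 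I would expect to handle this by an inductive argument on $\Sigma_{\mathrm{max}}$, peeling off one maximal cone at a time, much as one handles the $\P^n$ case by resolvents over common-zero loci, but bookkeeping the torus quotient and the $\pic(C)^n$-directions carefully throughout.
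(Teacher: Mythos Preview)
Your broad strategy is correct and aligns with the paper's: embed $\cmord$ as an open subset of the moduli of all $\Sigma$-collections (which the paper realises as a \emph{toric bundle} over $\pic^{\vec d}_\Sigma(C)\cong J(C)^{n-r}$, giving the factor $H^*(J(C);\Qb)^{\otimes(n-r)}$), and then resolve the complement by loci where sections in a primitive collection share a common zero on $C$. However, the mechanism you propose differs from the paper's in two substantive ways.

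First, the paper does not set up a Vassiliev-type stratification or filtration by ``defect''. Instead it constructs an explicit augmented symmetric semisimplicial space $\mathcal T_\bullet$ (a $\Delta_{\mathrm{inj}}S$ object in the sense of Fiedorowicz--Loday), whose $p$-th level is a disjoint union over $(p{+}1)$-tuples of primitive collections of spaces $C^{p+1}\times M_{j_0,\dots,j_p}$, with face maps given by ``adding a basepoint''. The spectral sequence then comes not from a stratification but from a sheaf resolution (via \cite{Banerjee21}, Lemma~2.11 and Theorem~1.2), and the $E_1$-differentials are computed explicitly as alternating sums of Gysin pushforwards of these face maps. The generators $z_j$ and $\alpha^j_i$ are identified concretely as $z_j=e-D_j$ and $\alpha^j_i=\gamma_i-\overline{\gamma_i}$ (with $e\in H^2(C)$, $\gamma_i\in H^1(C)$, $\overline{\gamma_i}\in H^1(J(C))$), emerging from the kernel/cokernel computation of $d_1$ rather than being posited a priori. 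Your stratification sketch would need to reproduce these identifications, and it is not clear that a naive filtration by number of common zeros gives the same $E_1$-page with the correct $S_{p+1}$-twisted invariants.

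Second, your proposed degeneration argument (a dimension/connectivity estimate on the codimension of degeneracy strata) is not what the paper uses. The paper argues that the $E_2$-differentials vanish in the range $p\le n_0$ purely by \emph{weight considerations}: all the $\mathcal T_p$ are smooth projective (toric bundles times powers of $C$), so their cohomology is pure, and a differential of nonzero bidegree cannot respect weights. Your codimension argument controls only where the truncated complex affects compactly supported cohomology (the paper does use this to bound the stable range), but it does not by itself force $d_2=0$; the purity argument is what does that. Finally, your proposed inductive peeling over $\Sigma_{\mathrm{max}}$ is unnecessary: the translation of non-degeneracy into primitive-collection language is handled directly via Cox's description of the exceptional locus $Z(\Sigma)$, and the paper simply transports that combinatorics to the fan $\Sigma^{\vec d}$.
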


In the case when $C=\P^1$ the result above simplifies exponentially, in a sense, thanks to the class group of $\P^1$ being trivial.
\begin{nmtheorem}\label{thm:PPSig}
	Let $\P_{\Sigma}$ be a complete simplicial toric variety defined by the rational polyhedral fan $\Sigma$ and let $\{\rho_1,\ldots, \rho_n\}$ be its rays. Let $\vec{d}$ be a degree vector satisfying $\sum d_iu_i= 0.$ Then $$\pmord \cong \Qb[D_1,\ldots, D_n]/ I \otimes \big(\oplus_{1\leq j\leq t}\wedge\Qb\{z_j\} \big)$$ where $t$ is the number of primitive collections of $\Sigma$, $z_j$ has cohomological degree $2\epsilon_j-1$ and of type $(\epsilon_j, \epsilon_j)$, $D_i$ of cohomological degree $2$ and type $(1,1)$ and $I$ is the ideal generated by all \begin{enumerate}
		\item $D_{i_1}\cdot\ldots\cdot D_{i_k}$ for $v_{i_1},\ldots, v_{i_k}$ not in a cone of $\Sigma$;
		\item $\sum_{i=1}^{r} \langle v,u_i\rangle$ for $v\in M$;
		\item  $\sum_{j=1}^{k} D_{i_1}\ldots \widehat{D_{i_j}}\ldots D_{i_k}$ for every primitive collection $\{\rho_{i_1}, \ldots, \rho_{i_k}\}\subset \Sigma(1)$, where $\widehat{D_{i_j}}$ denotes the $j^{th}$ entry removed.
	\end{enumerate}
\end{nmtheorem}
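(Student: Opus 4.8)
The plan is to deduce this from Cox's functorial description of maps to a toric variety, specialised to $C=\P^1$. Because $\mathrm{Pic}(\P^1)=\mathbb Z$, the line-bundle part of a $\Sigma$-collection on $\P^1$ is rigid — each $L_i$ is forced to equal $\mathcal O_{\P^1}(d_i)$ — and the compatibility isomorphisms $\alpha_{e^i}$ are exactly a linearisation for the ``Cox torus'' $G:=\mathrm{Hom}_{\mathbb Z}(\mathrm{Cl}(\PSig),\mathbb{G}_m)$ (of rank $n-r$; its finite part is invisible over $\Qb$). Hence $\pmord\cong[U/G]$, where $U\subset V:=\bigoplus_{i=1}^n H^0(\P^1,\mathcal O(d_i))$ is the open locus of non-degenerate collections, and so $H^*(\pmord;\Qb)=H^*_G(U;\Qb)$. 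This is the $g=0$ instance of the construction behind \Cref{thm:CPSig}, and the vanishing of $H^*(J(\P^1);\Qb)$ is what collapses the factor $H^*(J(C);\Qb)^{\otimes(n-r)}$ to $\Qb$.

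I would then compute $H^*_G(U;\Qb)$ by excision inside the ambient linear $G$-representation. Since $V$ is $G$-equivariantly contractible, $H^*_G(V;\Qb)=H^*(BG;\Qb)=\Qb[D_1,\dots,D_n]$ modulo the linear relations (2), where $D_i$ is the equivariant Chern class of the $i$-th coordinate character. Using $Z(\Sigma)=\bigcup_j V(x_\rho:\rho\in E_j)$, the complement is $V\setminus U=\bigcup_{j=1}^t Z_j$ with $Z_j=\{(s_i)\in V:\exists\,p\in\P^1,\ s_\rho(p)=0\ \forall\rho\in E_j\}$. Each $Z_j$ has the $G$-equivariant resolution $\widetilde{Z_j}=\{(p,(s_i))\in\P^1\times V:s_\rho(p)=0\ \forall\rho\in E_j\}\to Z_j$, a vector bundle over $\P^1$ with trivial $G$-action on the base, so $(\widetilde{Z_j})_G\simeq BG\times\P^1$; the map to $Z_j$ is finite and birational, and the multi-intersections $Z_S=\bigcap_{j\in S}Z_j$ are resolved by analogous $G$-bundles over $\prod_{j\in S}\P^1$ (the coincidence points being independent when the $E_j$, $j\in S$, are pairwise disjoint, with arrangement-type corrections over the small diagonals otherwise).

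The heart of the argument is the spectral sequence of mixed Hodge structures obtained by feeding the Mayer--Vietoris spectral sequence of the cover $\{Z_j\}$ into the equivariant localisation sequence of the pair $(V,U)$; its $E_1$-page is built from $H^*(BG;\Qb)$ and the groups $H^*((\widetilde{Z_S})_G;\Qb)=H^*(BG;\Qb)\otimes H^*(\prod_{j\in S}\P^1;\Qb)$ with the suitable Gysin shifts. From the first differential I would extract, for each primitive collection $E_j$, exactly three features. The equivariant fundamental class pushes forward to $[Z_j]^G=\int_{\P^1}\prod_{\rho\in E_j}(D_\rho+d_\rho h)=\sum_{\ell} d_{i_\ell}\prod_{\rho\in E_j,\ \rho\neq\rho_{i_\ell}}D_\rho$ (with $h\in H^2(\P^1)$, $h^2=0$), which becomes relation (3) once the coefficients $d_{i_\ell}$ are cleared using $\sum_i d_iu_i=0$ together with relations (1)--(2). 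The fibrewise point class of $\widetilde{Z_j}$ pushes forward to $\prod_{\rho\in E_j}D_\rho$, the equivariant class of a single codimension-$|E_j|$ fibre $\{s_\rho(p_0)=0:\rho\in E_j\}$; its vanishing gives $\prod_{\rho\in E_j}D_\rho=0$, and since every non-cone set of rays contains some $E_j$ this yields all of the monomial relations (1). The remaining (syzygy) part of the localisation sequence attached to $\widetilde{Z_j}$ produces a new exterior generator $z_j$ whose bidegree is governed by that same codimension $|E_j|$, so $z_j$ has cohomological degree $2|E_j|-1=2\epsilon_j-1$ and is of Tate type $(\epsilon_j,\epsilon_j)$; the deeper strata $Z_S$ contribute precisely the products $z_{j_1}\cdots z_{j_s}$.

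Finally, every entry of the resulting $E_2$-page is a subquotient of tensor products of $H^*(BG;\Qb)$, pieces of $H^*(\P^1;\Qb)$, and the classes $z_j$, all of pure Tate type; hence $E_2^{-p,q}$ is pure of weight $q$, and since each $d_r$ ($r\ge2$) is a morphism of mixed Hodge structures while $d_r\colon E_r^{-p,q}\to E_r^{-p+r,q-r+1}$ changes weight, the sequence degenerates at $E_2$ and $E_2$ is the whole answer — not a proper quotient, in contrast with the range-restricted statement of \Cref{thm:CPSig}. Purity then splits the induced filtration, yielding the asserted algebra isomorphism, with $z_j^2=0$ automatic in odd degree over $\Qb$; as an independent check, a point count of $\pmord$ over finite fields pins the Hodge--Deligne polynomial to that of the stated algebra by purity. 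I expect the real obstacle to be the identification of $E_2$ in the third step — controlling the contributions of the deeper strata $Z_S$ when primitive collections overlap, and organising the Chern-class bookkeeping so that the equivariant Gysin classes are seen to cut out precisely the ideal $I$ in the normalisation of relation (3). The reduction to $H^*_G(U)$, the computation of $H^*(BG;\Qb)$, and the weight-degeneration are comparatively formal, and it is exactly the disappearance of $H^*(J(C);\Qb)^{\otimes(n-r)}$ that removes the constraint $p\le n_0$.
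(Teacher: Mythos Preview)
Your approach is correct and close in spirit to the paper's, but the technical packaging differs in two ways.  First, the paper does not work equivariantly on $V$; it passes immediately to the geometric quotient and realises $\pmord$ as an open dense subset of an auxiliary complete simplicial toric variety $\P_{\Sigma^{\vec d}}$, built from a new fan $\Sigma^{\vec d}$ whose rays index the monomial coefficients of the sections and whose primitive collections are inherited from those of $\Sigma$.  Second, instead of Mayer--Vietoris on the cover $\{Z_j\}$, the paper builds a symmetric semi-simplicial ($\Delta_{\mathrm{inj}}S$) object $X_\bullet$ augmented over the complement, with $X_p=\bigsqcup_{1\le i_0,\ldots,i_p\le t}(\P^1)^{p+1}\times \P_{\Sigma^{\vec d-\sum_k 1_{E_{i_k}}}}$ and face maps given by ``adding a basepoint'', and then invokes the sheaf resolution of \cite{Banerjee21}.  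After taking $S_{p+1}$-sign-invariants only the columns $p=0,1,2$ of the resulting $E_1$-page are nonzero; the paper checks by hand that $d_1^2$ is injective, so degeneration at $E_2$ is forced by the shape of the page rather than by your purity argument (though purity is essentially how the paper argues in the higher-genus case).

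The core Gysin computation is the same in both approaches: the class of the ``one basepoint in $E_k$'' locus is $\sum_j D_{k_1}\cdots\widehat{D_{k_j}}\cdots D_{k_{\epsilon_k}}$, the class of a fixed-point fibre is $\prod_j D_{k_j}$, and the paper's generator $z_j$ is literally the class $e-D_j$ in $\ker d_1^1$, matching your syzygy class.  What your framework buys is conceptual transparency and independence from \cite{Banerjee21}; what the paper's buys is that the semi-simplicial resolution with repeated indices and sign action \emph{automatically} organises the deeper strata --- exactly the overlapping-primitive-collection bookkeeping you flag as the main obstacle disappears, because one never needs to analyse the intersections $Z_S$ directly, and the spectral sequence truncates to three columns.
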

An immediate consequence of applying the Grothendieck-Lefschtez fixed point theorem (see \cite{Grothendieck}) to the cohomology results above is that the $\mathbb{F}_q$-points of these moduli spaces is of the order of $q^{\sum_{i=1}^{n}d_i-ng+r}$, where $\mathrm{dim}\,\,\, \cmord = \sum_{i=1}^{n}d_i-ng+r$.

\paragraph{Some context.}\begin{enumerate}
	\item \textit{On toric varieties.} The moduli space of curves in toric varieties, under various constraints has been an object of interest from the angles of tropical geometry and enumerative geometry for a while now. The moduli space of \emph{stable} maps naturally occur in enumerative geometry and mirror symmetry. However this note is entirely unrelated to those. Our focus is not stable, but actual algebraic maps from smooth projective curves to complete simplicial toric varieties- their topology and arithmetic, and thus working towards an entirely different goal than enumerative geometry.
	
	\item \textit{On morphism-spaces and Batyrev-Manin type conjectures.} Morphism-spaces have been studied in various guises from various angles. 
	\begin{itemize} 
		\item In topology, there is an influential paper of Segal (\cite{Segal})where he worked specifically with projective spaces, using scanning maps to compute the stable homology of what he calls the moduli space of `based' holomorphic maps $\mathrm{Rat}^*_n(\C\P^1,\C\P^r)$, the moduli space of $(r+1)$ polynomials in one variable of degree $n$, up to $\C^{\times}$. He generalizes it to higher genera in the domain. To this date, to the best of our knowledge, all generalizations (e.g replacing $\P^1$ by $\P^n$ with $n\geq 1$, or replacing $\C\P^r$ by a Grassmanian) all critically uses the key idea of Segal's. From the point of view topology, the last known activity seems to be Guest's beautiful paper (see \cite{Guest}) where he shows that the moduli space of holomorphic maps of a fixed degree from $\P^1$ to $\PSig$ (which, he assumed is smooth and projective), has relations with configuration spaces of $C$ with labels in a partial monoid, and uses this connection to show that this moduli space is homotopy equivalent, up to a certain range (which he computed in the case of some specific examples of $\PSig$) i.e. stably. 
		\item In number theory, in a landmark paper by Browning and Sawin (see\cite{BS}) they handle the case when the target projective space is replaced by a hypersurface of low enough degree by using a geometric version of the Hardly-Littlewood circle method. 
	\end{itemize}
In a sense lying in the middle, our methods are, on one hand, homotopy theoretic resulting in the understanding of the topology of $ \cmord$, and on the other hand all maps are algebraic, thus keeping track of the arithmetic data every step of the way.
In turn we see a much stronger version of Guest's with techniques that bypass Segal's scanning maps. To be precise, our computation in the case of $\P^1$ is the complete cohomology ring (i.e. not just the stable part, unlike Guest's) with Mixed Hodge structure, and for higher genus, a spectral sequence which gives the stable cohomology when the degree is large enough. 

There's one slight caveat-- unlike Guest, we work with cohomology with rational coefficients (whereas his are with integer coefficients). Although, theoretically, our methods can be moulded to work for cohomology with $\mathbb{Z}$-coefficients in practice the computations get unnecessarily difficult, with no impact to the arithmetic implications.
\end{enumerate}

\paragraph{Plan of attack.}
Keeping aside technical algebro-geometric difficulties stemming from genus considerations, the method for both the theorems is essentially homotopy theoretic. It makes crucial use of \cite{Banerjee21}, or more accurately the proof of it, and is, in some ways, naturally similar to the case when the  toric variety in question is replaced by a weighted projective stack (whose generic point has a non-trivial stabilizer)- a problem answered by Banerjee, Park and Schmitt in \cite{BPS}. This also brings to the forefront the fact that our method behaves well in the stacky world because the key lemma in our proof (\cite[Lemma 2.11]{Banerjee21}) is a statement that holds with sheaves of $\Qb$-vector spaces as well as $\ell$-adic sheaves in \etale topology. To portray the general philosophy behind these results we first prove the case of $C=\P^1$ and then move on to the higher genus case.

\section{Cox ring and its generalization to toric bundles.}
In this section we briefly recall Cox's method (see \cite{Coxring}) of attaching a homogenous polynomial ring to a toric variety $\PSig$ (which need not be complete, or simplicial for most definitions to work, but we still assume them for our purposes), a monomial ideal, and a reductive linear algebraic group, such that $\PSig$ can be constructed as a categorical quotient (which is a geometric quotient in case the toric variety is complete and simplicial), much like we attach a homogenous coordinate ring, the irrelevant ideal to a projective space and the $1$-dimensional torus $\mathbb{G}_m$.

Then we use it in a relative setting, defining what is called a toric bundle, and we compute its cohomology (which we use in the proof of Theorem \ref{thm:CPSig}.)

Very briefly we recall parts of \cite{Coxring}. Let $\Sigma$ be a rational polyhedral fan, and let $\PSig$ be the corresponding toric variety. Let $\Sigma(1) =\{\rho_1,\ldots, \rho_n\}$ be the set of rays. The \emph{homogenous coordinate ring of $\PSig$} is defined by $$ K[x_1,\ldots, x_n]$$ (with grading given by the class group $\mathrm{Cl}(\Sigma)$ of $\PSig$, which we will need later). The \emph{exceptional locus} $Z(\Sigma)$ is defined as the space in $\A^n=\mathrm{Spec} (K[x_1,\ldots, x_n])$ cut out by the monomial ideal $B(\Sigma)\subset  K[x_1,\ldots, x_n]$ which is generated by monomials of the form $$x_{i_1}\cdot\ldots x_{i_{\epsilon_k}}$$ for each \emph{primitive collection} $E_k=\{\rho_{i_1},\ldots, \rho_{\epsilon_k}\}$ of rays. Let $G(\Sigma):= \mathrm{Hom}(\mathrm{Cl}(\Sigma),\mathbb{G}_m)$ act on $\A^n-Z(\Sigma)$ by $$g\cdot (t_1,\ldots, t_n):= (g(D_1) t_1, \ldots, g(D_n) t_n)$$ and the resulting geometric quotient (see \cite{CLS} or \cite[Theorem 2.1]{Coxring}) is $\PSig$. Mimicking the case of projective spaces, we will denote a point in $\PSig$ by $[t_1:\ldots:t_n]$  when there is no confusion, which denotes an equivalence class of a point $(t_1,\ldots,t_n)\in  \A^n-Z(\Sigma)$ under $G(\Sigma)$.

One can, of course, make a similar construction in a relative setting i.e. over a base scheme $X$. Let $\mathcal{F}$ be a vector bundle on $X$ that splits into sub-vector bundles  $\mathcal{F}_1\oplus\ldots\oplus \mathcal{F}_n$. The action of the reductive algebraic group $G(\Sigma)$ on $\mathcal{F}$ is locally given by $$g\cdot (s_1,\ldots, s_n):= (g(D_1)s_1, \ldots, g(D_n) s_n)$$ where for all sections $s_i$ of $\mathcal{F}_i$, $1\leq i\leq n$. The exceptional locus $\mathcal{Z}_X(\Sigma)$ is constructed likewise and, by abuse of notation, denoting the total space of $\mathcal{F}$ by $\mathcal{F}$, we call the geometric quotient $$\big(\mathcal{F}-\mathcal{Z}_X(\Sigma)\big)/ G(\Sigma)$$ a \emph{toric bundle}, with fibres naturally isomorphic to $\PSig$, and denote it by $\mathbb{P}_X(\mathcal{F},\Sigma)$. When $X$ is a point and in turn, $\mathcal{F}$ an affine space, we simply write (as we already did) $\PSig$.

The cohomology of $H^*(\PSig)$ is well known (see \cite{CLS} or \cite{Fulton}):
\begin{prop}\label{prop:CohomPSig}
	For a complete simplicial toric variety $\PSig$,  $$H^*({\PSig};\Qb) \cong \Qb[D_1,\ldots, D_n]/ I$$ where $D_{i}$ has cohomological degree $2$ and type $(1,1)$ and $I$ is the ideal generated by all \begin{enumerate}
		\item $D_{i_1}\cdot\ldots\cdot D_{i_k}$ for $\{\rho_{i_1}, \ldots, \rho_{i_k}\}$ a primitive collection in $\Sigma(1)$;
		\item $\sum_{i=1}^{r} \langle v,u_i\rangle  D_i$ for $v\in M$.
	\end{enumerate} 
\end{prop}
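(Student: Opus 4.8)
The plan is to take the Cox presentation recalled just above: write $\PSig \cong U/G$ with $U := \A^n\setminus Z(\Sigma)$ and $G := G(\Sigma) = \mathrm{Hom}(\mathrm{Cl}(\Sigma),\mathbb{G}_m)$, and to compute $H^*(\PSig;\Qb)$ through the $G$-equivariant cohomology of $U$. The first step is to legitimise the identification $H^*(\PSig;\Qb)\cong H^*_G(U;\Qb)$: since $\Sigma$ is simplicial the $G$-action on $U$ has finite stabilisers, so the natural map from the quotient stack $[U/G]$ to its coarse space $\PSig$ is a rational-cohomology isomorphism, while $H^*([U/G];\Qb) = H^*_G(U;\Qb)$ holds by definition. (If one prefers to avoid stacks: the Leray spectral sequence of the Borel construction $U\times_G EG\to U/G$ collapses rationally, because its fibres are classifying spaces of finite groups and hence $\Qb$-acyclic.) One can of course instead simply invoke the classical theorem of Jurkiewicz and Danilov (as in \cite{CLS} or \cite{Fulton}); the Cox-quotient derivation is the one worth spelling out because it is exactly what generalises to the toric bundles treated next.

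The second step computes the equivariant cohomology of the ambient affine space. As $\A^n$ is $G$-equivariantly contractible, $H^*_G(\A^n;\Qb)\cong H^*(BG;\Qb)\cong \Sym_\Qb\big(\mathrm{Cl}(\Sigma)\otimes\Qb\big)$, with $\mathrm{Cl}(\Sigma)\otimes\Qb$ in degree $2$. Pulling back the characters $D_i\in \mathrm{Cl}(\Sigma)$, i.e. the classes of the torus-invariant prime divisors, gives a surjection $\Qb[D_1,\ldots,D_n]\to H^*_G(\A^n;\Qb)$; the exact sequence $0\to M\otimes\Qb\to \Qb^{\Sigma(1)}\to \mathrm{Cl}(\Sigma)\otimes\Qb\to 0$ — valid because the rays span $N_\R$, a consequence of completeness — identifies its kernel with the ideal generated by the linear forms $\sum_i\langle v,u_i\rangle D_i$, $v\in M$. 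This is relation (2), and each $D_i$, being the class of an algebraic divisor, is of Hodge type $(1,1)$.

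The third step, which I expect to be the real work, is to pass from $\A^n$ to the open subset $U = \A^n\setminus Z(\Sigma)$. Set-theoretically $Z(\Sigma)$ is the union of the coordinate subspaces $V(x_\rho:\rho\in E_j)$ over the primitive collections $E_1,\ldots,E_t$ — equivalently, the radical of $B(\Sigma)$ is the Stanley-Reisner ideal $\big(\prod_{\rho\in E_j}x_\rho\big)_j$, and the $V(x_\rho:\rho\in E_j)$ are its irreducible components. Using the localisation (Gysin) long exact sequences — or, since $Z(\Sigma)$ is reducible and singular, a Mayer-Vietoris spectral sequence assembled from the stratification of $Z(\Sigma)$ by coordinate subspaces — one shows that restriction $H^*_G(\A^n;\Qb)\to H^*_G(U;\Qb)$ is surjective with kernel the ideal of classes supported on $Z(\Sigma)$, which is generated by the equivariant Thom classes $\prod_{\rho\in E_j}D_\rho$ of the components. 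This is relation (1). The obstacle is precisely the bookkeeping here: one must see that all the long exact sequences break into short exact pieces, i.e. that no boundary maps survive and the Thom pushforwards carry no unexpected relations. The clean way to arrange this is to establish that everything in sight is concentrated in even cohomological degrees and generated by divisor classes — for instance by choosing a generic one-parameter subgroup of the big torus acting on $\PSig$ with isolated fixed points, applying Bialynicki-Birula to produce an affine paving of $\PSig$, concluding that $H^*(\PSig;\Qb)$ is even with $H^2\cong\mathrm{Cl}(\Sigma)\otimes\Qb$, and then pinning down the relations by the intersection-theoretic vanishing $D_{i_1}\cdots D_{i_k}=0$ whenever $\{\rho_{i_1},\ldots,\rho_{i_k}\}$ spans no cone of $\Sigma$ (the corresponding invariant cycles being disjoint) together with the linear relations of the second step, and finally checking against the known Betti numbers of $\PSig$ that there is nothing more.

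Assembling the second and third steps yields $H^*(\PSig;\Qb)\cong\Qb[D_1,\ldots,D_n]/I$ with $I$ generated by relations (1) and (2) exactly as stated; the Hodge-type assertion on the $D_i$ is immediate from the second step.
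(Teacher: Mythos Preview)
The paper does not actually prove this proposition: it is stated as well known, with citations to \cite{CLS} and \cite{Fulton}, and no argument is given. Your proposal therefore goes well beyond what the paper supplies.

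That said, your derivation is correct and is in fact the natural one to give in the paper's setup, since the Cox quotient $\PSig \cong (\A^n\setminus Z(\Sigma))/G(\Sigma)$ has just been recalled and, as you note, this route is precisely what generalises to the toric-bundle statement that follows (where the paper again cites rather than proves, referring to \cite{SU}). The three steps are all sound: the identification $H^*(\PSig;\Qb)\cong H^*_{G}(U;\Qb)$ via finite stabilisers in the simplicial case, the computation of $H^*_G(\A^n;\Qb)$ as the symmetric algebra on $\mathrm{Cl}(\Sigma)\otimes\Qb$ together with the short exact sequence $0\to M\otimes\Qb\to\Qb^{\Sigma(1)}\to\mathrm{Cl}(\Sigma)\otimes\Qb\to 0$ yielding relation~(2), and the passage to $U$ via excision/localisation giving relation~(1). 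Your honest flagging of the bookkeeping in the third step, and the fallback to an affine paving (Bialynicki--Birula) plus a Betti-number count, is exactly how one closes the argument cleanly; this is the Jurkiewicz--Danilov theorem in its standard form.
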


For any character $\lambda\in M$ there exists a $1$-dimension representation of the torus, and thus, there is an associated line bundle on $X$ which we denote by $\xi_{\lambda}$. In \cite{SU} Sankaran and Uma compute the cohomology (and Chow ring) of such toric bundles which we record here for future purposes.
\begin{prop}[\cite{SU}, Theorem 1.2]
Let $\Sigma$ be a rational polyhedral fan with rays $\{\rho_1,\ldots,\rho_n\}$. Let $\PSig\hookrightarrow \mathbb{P}_X(\mathcal{F},\Sigma)\to X$ be a toric bundle as above. Then the rational cohomology $H^*(\mathbb{P}_X(\mathcal{F},\Sigma);\Qb)$ is isomorphic, as a $H^*(X;\Qb)$-algebra, to $$H^*(X;\Qb)[D_1,\ldots, D_n]/I(\mathbb{P}_X(\mathcal{F},\Sigma))$$where $I(\mathbb{P}_X(\mathcal{F},\Sigma))$ is the ideal generated by all \begin{enumerate}
	\item $D_{i_1}\cdot\ldots\cdot D_{i_k}$ for $\{\rho_{i_1}, \ldots, \rho_{i_k}\}$ a primitive collection in $\Sigma(1)$;
	\item $\sum_{i=1}^{r} \langle e^j,u_i\rangle  D_i-c_1(\xi_{e^j})$ where $c_1(\xi_{e^j})\in H^2(X;\Qb)$ denotes the first Chern character of $\xi_{e^j}$.
\end{enumerate} 
\end{prop}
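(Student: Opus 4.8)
The plan is to run the Leray--Hirsch theorem for the Zariski-locally trivial fibre bundle $\PSig\hookrightarrow\mathbb{P}_X(\mathcal{F},\Sigma)\to X$ and then identify the kernel of the resulting presentation by a dimension count on associated gradeds. Write $\pi\colon\mathbb{P}_X(\mathcal{F},\Sigma)\to X$ for the structure map. First I would produce the classes $D_i$ globally: the $G(\Sigma)$-invariant coordinate divisor $\{x_i=0\}$ on $\mathcal{F}-\mathcal{Z}_X(\Sigma)$ descends to a relative effective Cartier divisor, and, setting $\mathcal{L}_i:=\mathcal{O}(\{x_i=0\})$ and $D_i:=c_1(\mathcal{L}_i)\in H^2(\mathbb{P}_X(\mathcal{F},\Sigma);\Qb)$, one checks that $\mathcal{L}_i$ restricts on each fibre to the Cox line bundle $\mathcal{O}_{\PSig}(D_i)$. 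By Proposition \ref{prop:CohomPSig} the restrictions of the $D_i$ generate $H^*(\PSig;\Qb)$ as a $\Qb$-algebra; since $H^*(\PSig;\Qb)$ is finite dimensional and concentrated in even degrees, Leray--Hirsch applies, so a choice of monomials in the $D_i$ restricting to a $\Qb$-basis of $H^*(\PSig;\Qb)$ is an $H^*(X;\Qb)$-module basis of $H^*(\mathbb{P}_X(\mathcal{F},\Sigma);\Qb)$, and in particular the $H^*(X;\Qb)$-algebra map $\Phi\colon H^*(X;\Qb)[D_1,\dots,D_n]\to H^*(\mathbb{P}_X(\mathcal{F},\Sigma);\Qb)$, $D_i\mapsto c_1(\mathcal{L}_i)$, is surjective.

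Next I would check that both families of relations lie in $\ker\Phi$. If $\{\rho_{i_1},\dots,\rho_{i_k}\}$ is a primitive collection, then the locus $\{x_{i_1}=\cdots=x_{i_k}=0\}$ is contained in the removed exceptional locus $\mathcal{Z}_X(\Sigma)$, so the relative divisors $\{x_{i_1}=0\},\dots,\{x_{i_k}=0\}$ have empty common support and $D_{i_1}\cdots D_{i_k}=0$; this is relation (1). For relation (2): since $\sum_i\langle e^j,u_i\rangle D_i=0$ in $\mathrm{Cl}(\Sigma)$, the monomial $\prod_i x_i^{\langle e^j,u_i\rangle}$ is $G(\Sigma)$-invariant and descends to a rational section of $\pi^*\xi_{e^j}$ (with $\xi_{e^j}=\bigotimes_i\mathcal{F}_i^{\otimes\langle e^j,u_i\rangle}$ up to dual), whose divisor is $\sum_i\langle e^j,u_i\rangle\{x_i=0\}-\pi^*\mathrm{div}(\xi_{e^j})$; taking first Chern classes gives $\sum_i\langle e^j,u_i\rangle D_i=\pi^*c_1(\xi_{e^j})$. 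Hence $\Phi$ factors through a surjection $\bar\Phi$ of $H^*(X;\Qb)$-algebras out of $H^*(X;\Qb)[D_1,\dots,D_n]/I(\mathbb{P}_X(\mathcal{F},\Sigma))$.

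Finally, to see $\bar\Phi$ is injective I would pass to associated gradeds for the filtration of $H^*(X;\Qb)$ by powers of its augmentation ideal $H^{>0}(X;\Qb)$, induced on both sides of $\bar\Phi$ (exhaustive, and finite in each cohomological degree since $X$ is of finite type). On associated graded the correction term $\pi^*c_1(\xi_{e^j})$ in relation (2) has positive filtration and so disappears, whence the associated graded of the source is a quotient of $\mathrm{gr}\,H^*(X;\Qb)\otimes\big(\Qb[D_1,\dots,D_n]/I_0\big)$, where $I_0$ is generated by the primitive-collection monomials together with the \emph{untwisted} linear forms $\sum_i\langle e^j,u_i\rangle D_i$, i.e.\ by Proposition \ref{prop:CohomPSig} a quotient of $\mathrm{gr}\,H^*(X;\Qb)\otimes H^*(\PSig;\Qb)$. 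On the other hand Leray--Hirsch identifies the associated graded of $H^*(\mathbb{P}_X(\mathcal{F},\Sigma);\Qb)$ with precisely $\mathrm{gr}\,H^*(X;\Qb)\otimes H^*(\PSig;\Qb)$. Since $\mathrm{gr}\,\bar\Phi$ is surjective and its source has dimension at most that of its target in each total degree, all these dimensions coincide and $\mathrm{gr}\,\bar\Phi$, hence $\bar\Phi$, is an isomorphism.

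The step I expect to be the main obstacle is exactly this last one: one must be sure the filtration is exhaustive and degreewise finite so that an isomorphism on associated gradeds genuinely lifts to $\bar\Phi$ (even though $H^*(X;\Qb)$ need not be finite dimensional), and that $I_0$ reproduces the presentation of Proposition \ref{prop:CohomPSig} with matching grading and sign conventions; the geometric ingredients — the relative Cox bundles $\mathcal{L}_i$, the containment of primitive-collection loci in $\mathcal{Z}_X(\Sigma)$, and the invariant monomial descending to a rational section of $\pi^*\xi_{e^j}$ — are routine once the relative Cox construction above is in place. An alternative that sidesteps the rank count is to compute directly with equivariant cohomology, $H^*(\mathbb{P}_X(\mathcal{F},\Sigma);\Qb)=H^*_{G(\Sigma)}(\mathcal{F}-\mathcal{Z}_X(\Sigma);\Qb)$ for the free $G(\Sigma)$-action: relation (2) then falls out of the isomorphism $H^*_{G(\Sigma)}(\mathcal{F};\Qb)\cong H^*(X;\Qb)\otimes H^*(BG(\Sigma);\Qb)$ — the twist $c_1(\xi_{e^j})$ appearing because the equivariant class of $\{x_i=0\}$ in $H^2_{G(\Sigma)}(\mathcal{F})$ is the pullback of $c_1(\mathcal{F}_i)$ from $X$ plus the class of the character $D_i$ — while relation (1) falls out of the Gysin sequence excising $\mathcal{Z}_X(\Sigma)$, whose irreducible components have fundamental classes $\prod_{\rho\in E}D_\rho$ indexed by primitive collections $E$.
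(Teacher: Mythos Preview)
Your proof is correct and is precisely the approach the paper has in mind: the paper does not actually prove this proposition but records it from \cite{SU}, and later (just before Proposition \ref{prop:CohomPSig:vecd}) remarks that the result ``is essentially an application of the Leray--Hirsch theorem for the cohomology of fibre bundles.'' Your argument --- constructing the relative Cox line bundles $\mathcal{L}_i$, verifying the two families of relations geometrically, and then establishing injectivity by a rank count on the associated graded for the $H^{>0}(X;\Qb)$-adic filtration --- is exactly such an application, carried out in full. The alternative via $G(\Sigma)$-equivariant cohomology you sketch at the end is also sound and is in fact closer to how \cite{SU} organizes the computation.
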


\section{Rational curves on toric varieties}\label{sec2}

\begin{proof}[Proof of Theorem \ref{thm:PPSig}]
	\textsc{Step 1: }\textit{Constructing some auxiliary toric varieties.}
	
\noindent	Let $\Gamma_{d_i}:=\Gamma(\P^1, \mathcal{O}_{\P^1}(d_i)) \cong \A^{d_i+1}$ for all $i=1,\ldots, n$. We now construct a toric variety ${\PSig}^{\vec{d}} $ corresponding a rational polyhedral fan which we denote by $\Sigma^{\vec{d}}$, as a geometric quotient as follows. 
	First, the one rays of $\Sigma^{\vec{d}}$ we define to be 
	\begin{gather*}
\Sigma^{\vec{d}}(1): = \{\rho_{1,0}, \ldots, \rho_{1,d_1}, \\  \hspace{3mm}   \rho_{2,0}, \ldots, \rho_{2,d_2}, \\ \ldots,\\ \rho_{n,0}, \ldots, \rho_{n,d_n}\}.
	\end{gather*} 
If $\{\rho_{i_1}, \ldots, \rho_{i_l}\}$ is a primitive collection in $\Sigma(1)$ then 	\begin{gather*}
 \{\rho_{i_1,0}, \ldots, \rho_{i_1,d_{i_1}}, \\  \hspace{3mm}   \rho_{i_2,0}, \ldots, \rho_{i_2,d_{i_2}}, \\ \ldots,\\ \rho_{i_l,0}, \ldots, \rho_{i_l,d_{i_l}}\}
\end{gather*} is a primitive collection in $\Sigma{\vec{d}} (1)$, and furthermore, these are the only primitive collections. We define the exceptional set $Z(\Sigma^{\vec{d}})$ in $\Pi_i \Gamma_{d_i} \subset \A^{\sum (d_i+1)}$ as that defined by all the primitive collections of $\Sigma^{\vec{d}}(1)$. Note that this determines $\Sigma^{\vec{d}}$ uniquely because the primitive collections determine the maximal cones (each maximal cone is generated by the rays that are in the complement of each primitive collection).

The reductive algebraic group $G(\Sigma):= \mathrm{Hom}({\mathrm{Pic}}(\PSig), \mathbb{G}_m)$ naturally acts on  \linebreak $\A^{\sum (d_i+1)} - Z(\Sigma^{\vec{d}})$ be virtue of its action on $\A^{n}-Z(\Sigma)$ (recall that $|\Sigma(1)|=n$) and the resulting geometric quotient is ${\PSig}^{\vec{d}}$, which is complete and simplicial.

Note for any $n$-vector $\vec{a}:= \{(a_1,\ldots, a_n)\} \in \N^n$ (not necessarily a degree vector i.e. not necessarily satisfying $\sum_i a_i u_i= 0$) one can construct a a complete simplicial toric variety ${\PSig}^{\vec{a}}$ which is not unique, in the sense that our construction does not define $\Sigma^{\vec{a}}$ uniquely, however they have isomorphic Stanley-Reisner rings and in turn, isomorphic rational cohomology rings. Also observe that ${\PSig}^{(1,\ldots,1)}=\PSig$, the variety we started with.

 We finish Step 1 by recording  the (rational) cohomology of $\P_{\Sigma^{\vec{a}}}$ for future purposes. Let $D_1,\ldots, D_n$ be the torus invariant divisors of $\PSig$ corresponding to the minimal lattice points $u_1,\ldots, u_n$ along the rays $\rho_1,\ldots, \rho_n\in \Sigma(1)$. By the construction above, the torus invariant divisors of $\P_{\Sigma^{\vec{a}}}$ are
 	\begin{gather*}
 	\{D_{1,0}, \ldots, D_{1,a_1}, \\  \hspace{3mm}   D_{2,0}, \ldots, D_{2,a_2}, \\ \ldots,\\ D_{n,0}, \ldots, D_{n,a_n}\}.
 \end{gather*} 

From this, and the construction of $\P_{\Sigma^{\vec{a}}}$ above, we arrive at the following:
 \begin{prop}\label{prop:CohomPSig:veca}
Let $\P_{\Sigma^{\vec{a}}}$ be as defined above. Then $$H^*(\P_{\Sigma^{\vec{a}}};\Qb) \cong \Qb[\{D_{i,j}: 1\leq i\leq n, 0\leq j\leq a_i\}]/ I(\Sigma^{\vec{a}}) $$ where $D_{i,j}$ has cohomological degree $2$ and type $(1,1)$ and $I(\Sigma^{\vec{a}})$ is the ideal generated by all \begin{enumerate}
	\item $\Pi_{0\leq j \leq a_{i_1}}D_{i_1,j}\cdot\ldots\cdot \Pi_{0\leq j \leq a_{i_k}}D_{i_k,j}$ for $\{\rho_{i_1}, \ldots, \rho_{i_k}\}$ a primitive collection in $\Sigma(1)$;
	\item $\sum_{i=1}^{r} \langle v,u_i\rangle D_{i,j}$ where $0\leq j\leq d_i$, for $v\in M$.
\end{enumerate} 
 \end{prop}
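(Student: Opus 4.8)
The plan is to deduce Proposition~\ref{prop:CohomPSig:veca} as the special case of the standard presentation of the rational cohomology of a complete simplicial toric variety --- that is, of Proposition~\ref{prop:CohomPSig} --- applied to $\P_{\Sigma^{\vec{a}}}$. Granting that, nothing remains to be computed, and the work consists in checking the hypotheses and reading off combinatorial data: (i) that $\P_{\Sigma^{\vec{a}}}$ is a complete simplicial toric variety; (ii) a description of its primitive collections; (iii) a description of its class group together with the degree map from its divisor group. Items (i) and (ii) are already contained in the construction of $\Sigma^{\vec{a}}$ above: the prescribed primitive collections of $\Sigma^{\vec{a}}(1)$ determine the maximal cones (each spanned by the rays lying outside a primitive collection), and since $\Sigma$ was complete and simplicial this recipe does assemble into a complete simplicial fan. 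So the only substantive point is (iii).

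For (iii), recall that the Cox construction presents $\P_{\Sigma^{\vec{a}}}$ as the geometric quotient of $\A^{\sum_i(a_i+1)}-Z(\Sigma^{\vec{a}})$ by the \emph{same} reductive group $G(\Sigma)=\mathrm{Hom}(\mathrm{Pic}(\PSig),\mathbb{G}_m)$ that produced $\PSig$, acting so that each of the coordinates $t_{i,0},\dots,t_{i,a_i}$ transforms through the character $[D_i]$. Consequently $\mathrm{Cl}(\P_{\Sigma^{\vec{a}}})\otimes\Qb\cong\mathrm{Cl}(\PSig)\otimes\Qb$ (rationally $\mathrm{Pic}$ and $\mathrm{Cl}$ coincide here, $\PSig$ being complete and simplicial), with the degree homomorphism from the divisor group sending every $D_{i,j}$ to $[D_i]$. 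By Proposition~\ref{prop:CohomPSig} the linear part of the cohomology ideal is then generated by a generating set for the kernel of this degree map, namely the forms $D_{i,j}-D_{i,0}$ (identifying the coordinates within each fibre) together with the forms $\sum_i\langle v,u_i\rangle D_{i,0}$, $v\in M$ (the relations pulled back from $\PSig$); this is what part~(2) records, once it is read as the family $\sum_i\langle v,u_i\rangle D_{i,j_i}$ over all $v\in M$ and all choices $j_i\in\{0,\dots,a_i\}$, since subtracting two such forms differing only over a single index $i_0$ produces $\langle v,u_{i_0}\rangle\bigl(D_{i_0,j}-D_{i_0,j'}\bigr)$ and the $u_{i_0}$ are primitive. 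Writing the resulting common class $D_{i,j}=D_{i,j'}$ as $D_i$, part~(1) is precisely the Stanley--Reisner ideal of $\Sigma^{\vec{a}}$ (the squarefree monomials on the minimal non-faces, i.e.\ on the primitive collections identified above), and under $D_{i,j}\mapsto D_i$ its generator attached to a primitive collection $\{\rho_{i_1},\dots,\rho_{i_k}\}$ of $\Sigma$ becomes $D_{i_1}^{a_{i_1}+1}\cdots D_{i_k}^{a_{i_k}+1}$, in agreement with how $\Sigma^{\vec{a}}$ was set up.

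Two remarks about robustness. Although the construction does not single out $\Sigma^{\vec{a}}$ uniquely as a fan, everything used above is canonical: the Stanley--Reisner ideal depends only on the prescribed primitive collections, and the linear ideal only on the degree map $D_{i,j}\mapsto[D_i]$ into $\mathrm{Cl}(\PSig)\otimes\Qb$, so $H^*(\P_{\Sigma^{\vec{a}}};\Qb)$ is independent of the choice and the statement is well posed. Moreover one can avoid exhibiting $\Sigma^{\vec{a}}$ as a genuine fan altogether: since $G(\Sigma)$ acts on $\A^{\sum_i(a_i+1)}-Z(\Sigma^{\vec{a}})$ with finite stabilizers, the quotient map induces $H^*(\P_{\Sigma^{\vec{a}}};\Qb)\cong H^*_{G(\Sigma)}\!\bigl(\A^{\sum_i(a_i+1)}-Z(\Sigma^{\vec{a}});\Qb\bigr)$, and the right-hand side is computed from the $G(\Sigma)$-equivariant Gysin/Mayer--Vietoris sequences for the coordinate-subspace arrangement $Z(\Sigma^{\vec{a}})$ over $H^*(BG(\Sigma);\Qb)=\Qb[\mathrm{Cl}(\PSig)\otimes\Qb]$, giving the same Stanley--Reisner-ring-over-a-polynomial-base presentation.

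The step I expect to demand the most care is (iii): verifying that reusing $G(\Sigma)$ really does identify $\mathrm{Cl}(\P_{\Sigma^{\vec{a}}})\otimes\Qb$ with $\mathrm{Cl}(\PSig)\otimes\Qb$ equipped with the degree map $D_{i,j}\mapsto[D_i]$, and then carrying the two families of linear relations faithfully through the substitution $D_{i,j}=D_i$ so that no generator is lost or spuriously introduced. Everything else is a direct citation of Proposition~\ref{prop:CohomPSig}.
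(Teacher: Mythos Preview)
Your approach is exactly what the paper does: it states the proposition as an immediate consequence of the construction of $\P_{\Sigma^{\vec{a}}}$ together with the standard presentation of the rational cohomology of a complete simplicial toric variety (Proposition~\ref{prop:CohomPSig}, with references to \cite{CLS} and \cite{Fulton}), offering no further argument. Your write-up is considerably more careful than the paper's one-line justification --- in particular your analysis of the linear part (iii), showing that the family $\sum_i\langle v,u_i\rangle D_{i,j_i}$ over all $v$ and all index choices generates the same ideal as the more transparent set $\{D_{i,j}-D_{i,0}\}\cup\{\sum_i\langle v,u_i\rangle D_{i,0}\}$ via the primitivity of the $u_i$, is a genuine verification that the paper omits --- but the logical route is identical.
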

	As an aside one should note that the same result holds for the rational Chow ring (see \cite{CLS} or \cite{Fulton}).

	\vspace{4mm}
	
	\textsc{Step 2: }\textit{Constructing a semi-simplicial space with the auxiliary toric varieties.}
	
\noindent	A morphism from $\P^1$ to $\PSig$ is given as $n$-tuple (again, recall $n=\Sigma(1)$ by definition) of sections $(s_1,\ldots, s_n)$, $d_i\in \Gamma_{d_i}$, such that \begin{enumerate}
		\item $\sum_i d_i u_i=0$
		\item $ s_i([a:b]) \notin Z(\Sigma)$;
	\end{enumerate} furthermore two such $n$-tuples, $(s_1,\ldots, s_n)$ and $(s'_1,\ldots, s'_n)$ give the same morphism if there exists $g\in \mathrm{Mor}({\mathrm{Pic}}(\PSig))$ such that $s_i= g(D_i)s'_i$ for all $i=1,\ldots, n$ (see \cite[Theorem 2.1]{Coxfunctor}).

	It follows that $\pmord$ is open dense in  $\P_{\Sigma^{\vec{d}}}$ and the complement, which we dub as the \emph{resultant $0$ locus} (in congruence with the very special case when $\PSig=\P^n$), is defined by $$\Upsilon({\vec{d}}) = \{(s_1,\ldots, s_n): s_i\in \Gamma_{d_i},  s_i([a:b]) \in Z(\Sigma) \text{ for some } [a:b]\in \P^1\}.$$ We are now going to construct a (semi)simplicial space augmented over $\Upsilon(\vec{d})$ as follows.

	 Let $\{E_1, \ldots, E_t\}$ be the set of primitive collections in $\Sigma(1)$, with cardinalities $\epsilon_1, \ldots, \epsilon_t$. For each $1\leq i \leq t$ let the rays of $E_i$ be denoted by $\{\rho_{i_1},\ldots, \rho_{i_{\epsilon_t}}\}$ and for each $E_i$ define a vector $1_{E_i}\in \N^n$ which has $1$s at the $i_1,\ldots, i_{\epsilon_t}$ coordinates and $0$ everywhere else. Also let $x,y$ denote the homogenous coordinates on $\P^1$. Let \begin{gather}
			X_0:= \bigsqcup_{1\leq i\leq t} \P^1\times \P_{\Sigma^{\vec{d}-1_{E_i}}}
		\end{gather}
	We define a map $\pi_0: X_0\to \P_{\Sigma^{\vec{d}}}$ by defining it on each component as follows: 
	\begin{gather*}
\P^1\times \P_{\Sigma^{\vec{d}-1_{E_i}}}\to \P_{\Sigma^{\vec{d}}}\\ [a:b], (s'_1,\ldots, s'_n)\mapsto (s_1,\ldots, s_n) \\ \text{such that }\begin{cases*}
s_j=s'_ j(ay-bx) \text{ whenever } \rho_j\in E_i; \\s_j=s'_j \text{ otherwise.}
\end{cases*}
	\end{gather*}
We call this map \emph{adding a basepoint} (in line with the case when the target space is just a projective space). Similarly, for $p\geq 0$ we define $X_p= \bigsqcup_{1\leq i_0\ldots, i_p\leq t}  (\P^1)^{p+1}\times \P_{\Sigma^{\vec{d}-\sum_{k=0}^{p} 1_{E_{i_k}}}}$ with maps on each component, and in turn on all of $X_p$, given by:
	\begin{gather*}
 (\P^1)^{p+1}\times \P_{\Sigma^{\vec{d}-\sum_{k=0}^{p} 1_{E_{i_k}}}} \to \P_{\Sigma^{\vec{d}}}\\ ([a_0:b_0], \ldots, [a_p:b_p]), (s'_1,\ldots, s'_n)\mapsto (s_1,\ldots, s_n) \\ \text{such that }\begin{cases*}
		s_j=s'_ j\Pi_{0\leq l \leq p}(a_ly-b_lx) \text{ whenever } \rho_j\in E_{i_k} \text{ for some }k; \\s_j=s'_j \text{ otherwise.}
	\end{cases*}
\end{gather*}
Noting that $X_p$ has $(p+1)^t$ components, indexed by collections of $(p+1)$ not-necessarily-distinct primitive collections $E_{i_0},\ldots, E_{i_p}$, we that that there are natural maps (face maps) $X_p\to X_{p-1}$ given, on each components, by 
	\begin{gather*}\text{ for each } 0\leq l\leq p:\\
	f_l: (\P^1)^{p+1}\times \P_{\Sigma^{\vec{d}-\sum_{k=0}^{p}1_{E_{i_k}}}} \to (\P^1)^{p}\times \P_{\Sigma^{\vec{d}-\sum_{k=0}^{p}1_{E_{i_k}}+ 1_{E_{i_l}}}}\\ ([a_0:b_0], \ldots, [a_p:b_p]), (s'_1,\ldots, s'_n)\mapsto ([a_0:b_0], \ldots, \widehat{[a_l:b_l]}, \ldots, [a_p:b_p])(s_1,\ldots, s_n) \\ \text{such that }\begin{cases*}
		s_j=s'_ j(a_ly-b_lx) \text{ whenever } \rho_j\in E_{i_l}; \\s_j=s'_j \text{ otherwise.}
	\end{cases*}
\end{gather*}

\vspace{4mm}

\textsc{Step 3: }\textit{The corresponding spectral sequence.}

\noindent This semi-simplicial space is actually a $\Delta_{\mathrm{inj}}S$ object in the sense of Fiederowicz-Loday (see \cite{FL}) and we use \cite[Theorem 2]{Banerjee21} on resolution of sheaves of $\Delta_{\mathrm{inj}}S$ objects to compute our desired cohomology.

If $j: \pmord\hookrightarrow {\PSig}^{\vec{d}}$ denotes the open embedding then we get a resolution of $\ell$-adic sheaves or constructible sheaves of $\Qb$-vector spaces (as the case might be) which reads as (see \cite[Lemma 2.11]{Banerjee21}):

\begin{gather}\label{resolSheaves}
j_! \Q_{\pmord}\hookrightarrow\Q_{\P_{\Sigma^{\vec{d}}}}\to \pi_{0_*} \Q_{X_0}\to \big(\pi_{1_*}\Q_{X_1}\otimes \sgn_{S_2}\big)^{S_2}\to \nonumber \\\ldots \to \pi_{p_*}\Q_{X_p}\otimes \sgn_{S_{p+1}}\big)^{S_{p+1}}\to \ldots
\end{gather}

\noindent Applying the functor $\mathrm{R\Gamma}({\textunderscore},\Q_{\P_{\Sigma^{\vec{d}}}})$ and taking cohomology we get a spectral sequence whose $E_1$ page reads as: \begin{gather*}
		E_1^{-p,q} = \begin{cases}
			H^q(\P_{\Sigma^{\vec{d}}})(0) & p=0,\\ \oplus_{1\leq k\leq t} H^{q-2(\epsilon_k-1)}(\P^1\times \P_{\Sigma^{\vec{d}-1_{E_k}}})(-1) & p=1,\\ \oplus_{1\leq i,j \leq t}  H^0(\P^1)\otimes H^2(\P^1)\otimes H^{q-2(\epsilon_k+\epsilon_j-2)-2}(\P_{\Sigma^{\vec{d}-1_{E_k} -1_{E_j}}})(-2) & p=2,\\ 0 & \text{ otherwise},
		\end{cases}
	\end{gather*} with the differentials given by the alternating sum of the Gysin pushforwards induced by the face maps, which is what we shall compute now. 
	\begin{itemize}
		\item \textit{Computing $d_1^{1,q}: E_1^{-1,q} \to E_1^{0,q}$.} 
		
		For simplicity we denote the differential by $d_1^1$. For each $k$ let $\iota_k:  \P_{\Sigma^{\vec{d}-1_{E_k}}}\hookrightarrow  \P_{\Sigma^{\vec{d}}}$ denote the inclusion given by adding a basepoint and $$\iota:=\sqcup \iota_k:  X_0= \bigsqcup_k \P_{\Sigma^{\vec{d}-1_{E_k}}}\hookrightarrow  \P_{\Sigma^{\vec{d}}}$$ denote the induced map on $X_0$. Choose generators $\1\in H^0(\P^1)$ and $e\in H^2(\P^1)$. Also, by $D_{l_1},\ldots, D_{l_m}$ we denote the following cohomology class in 
		$\P(\Sigma^{\vec{d}})$: 
		$$D_{l_1}\cdot \ldots\cdot D_{l_m}:= \sum_{0\leq i_j\leq d_l+1}D_{l_1,i_1}\cdot \ldots\cdot D_{l_r,i_r}.$$ With that shorthand notation setup, we claim that: \begin{gather*}
			d_1^1={f_0}_*: \bigoplus_k H^{*-2(\epsilon_k-1)}(\P^1\times  \P({\Sigma^{\vec{d}-1_{E_k}}}) )\to   H^*( \P(\Sigma^{\vec{d}}))\\ 
			\sum_k \1\otimes \iota_k^*\alpha + \sum_k e\otimes \iota_k^*\beta \mapsto \alpha \sum_{k=1}^{t}\Big(\sum_{j=1}^{\epsilon_k} D_{k_1}\cdot \ldots \cdot \widehat{D_{k_j}}\cdot \ldots \cdot D_{k_{\epsilon_k}}\Big)+ \beta \sum_{k=1}^{t} D_{k_1}\cdot\ldots \cdot D_{k_{\epsilon_k}}
		\end{gather*} is a map of $H^*(\P(\Sigma^{\vec{d}}))$-modules, where $\alpha, \beta\in H^*(\P(\Sigma^{\vec{d}}))$. To see this, first note that $$\iota_k^*: H^*(\P(\Sigma^{\vec{d}})) \to H^*( \P({\Sigma^{\vec{d}-1_{E_k}}}))$$ is a surjection; next, the image of the fundamental class $$[\P^1\times \P({\Sigma^{\vec{d}-1_{E_k}}})]\in H^0(\P^1\times \P(\Sigma^{\vec{d}-1_{E_k}}))$$ is the locus of elements in $\PSig$ that has a basepoint i.e. $\Upsilon(\vec{d})$, which is rationally equivalent, and thus cohomologous, to (a multiple of) $$\sum_{j=1}^{\epsilon_k} D_{k_1}\cdot \ldots \cdot \widehat{D_{k_j}}\cdot \ldots\cdot  D_{k_{\epsilon_k}}$$ and finally, for a fixed point $[a:b]\in \P^1$, the locus given by $$\{[s_0:\ldots: s_r]\in \P_{\Sigma^{\vec{d}}}: s_i([a:b])=0\}$$ is rationally equivalent, and in turn cohomologous, to (a multiple of) $D_{k_1}\cdot\ldots\cdot  D_{k_{\epsilon_k}}$. For the sake of simplicity we won't bother ourselves with the scalar multiples, which is fine because we're working over $\Q$.
		
		It is an easy check that Gysin pushforward $d_1^1={f_0}_*$ surjects onto the ideal generated by $$\sum_{j=1}^{\epsilon_k} D_{k_1}\cdot \ldots \cdot \widehat{D_{k_j}}\cdot \ldots \cdot D_{k_{\epsilon_k}}$$ and $D_{k_1}\cdot\ldots\cdot  D_{k_{\epsilon_k}}$ for all $k$, in $H^*(\P_{\Sigma^{\vec{d}}})$ and that the kernel of $d_1^1$ is given by elements of the form $\iota^*(\alpha)(D_i-e)$ for all $\alpha \in H^*(\PSig)$. 
		
		
		\item  \textit{Computing $d_1^{2,q}: E_1^{-2,q} \to E_1^{-1,q}$.} 
		
		For simplicity, we denote the differential by $d_1^2$. Like before, For each $1\leq k,l\leq t$ let $\iota_{k,l}:  \P(\Sigma^{\vec{d}-1_{E_k}-1_{E_l}}) \hookrightarrow  \P(\Sigma^{\vec{d}-1_{E_k}})$ denote the inclusion given by adding a basepoint and $$\iota:=\sqcup \iota_{k,l}:  X_1= \bigsqcup_k \P(\Sigma^{\vec{d}-1_{E_k}})\hookrightarrow  X_1$$ denote the induced map on $X_1$. Then the way we computed ${f_0}_*$ above works verbatim, and we have \begin{gather*}
			{f_0}_*:\bigoplus_{1\leq k,l \leq t}H^0(\P^1)\otimes H^2(\P^1)\otimes H^{*-2(\epsilon_k+\epsilon_l)} ( \P(\Sigma^{\vec{d}-1_{E_k}-1_{E_l}}) ) \to \bigoplus_k H^{*}( \P(\Sigma^{\vec{d}-1_{E_k}}))\\
			\1 \otimes e\otimes \alpha \mapsto e\otimes\alpha\sum_k \Big( \sum_{j=1}^{\epsilon_k} D_{k_1}\cdot \ldots \cdot \widehat{D_{k_j}}\cdot \ldots \cdot D_{k_{\epsilon_k}} \Big)
		\end{gather*} and 
		\begin{gather*}
			{f_1}_*: \bigoplus_{1\leq k,l \leq t} H^0(\P^1)\otimes H^2(\P^1)\otimes  H^2(\P^1)\otimes H^{*-2(\epsilon_k+\epsilon_l)} (\P_{\Sigma^{\vec{d}-1_{E_k}-1_{E_l}}}) \to \oplus_k H^{*}( \P_{\Sigma^{\vec{d}-1_{E_k}}})\\
			\1 \otimes e\otimes \alpha \mapsto \1\otimes\alpha \sum_k D_{k_1}\cdot\ldots\cdot  D_{k_{\epsilon_k}},
		\end{gather*} and from which we obtain $$d_1^2(1\otimes e \otimes \alpha)= 1\otimes \alpha  \sum_k D_{k_1}\cdot\ldots\cdot  D_{k_{\epsilon_k}} - e\otimes \alpha \sum_k \Big( \sum_{j=1}^{\epsilon_k} D_{k_1}\cdot \ldots \cdot \widehat{D_{k_j}}\cdot \ldots \cdot D_{k_{\epsilon_k}} \Big).$$ Note that $d_1^2$ is injective, and the image is generated by  $$\sum_{j=1}^{c_k} D_{k_1}\cdot \ldots \cdot \widehat{D_{k_j}}\cdot \ldots \cdot D_{k_{\epsilon_k}}$$ and $D_{k_1}\cdot\ldots\cdot  D_{k_{\epsilon_k}}$ for all $k$, in $H^*(X_0)$. 
	\end{itemize}
	In effect on the $E_2$ page all differentials vanish; the spectral sequence degenerates and we obtain the cohomology result of Theorem \ref{thm:PPSig}.
\end{proof}

\section{Higher genus curves on toric varieties}

In this section we prove Theorem \ref{thm:CPSig}.  Recall that $E_1, \ldots, E_t$ are the primitive collections of rays in $\Sigma$. Whereas by $\mathcal{L}_i$ we mean a line bundle that corresponds to the ray $\rho_i$, we will also use the notation $\mathcal{L}_{\rho}$ to denote the line bundle that corresponds to the ray $\rho$- both notations have their uses. Also recall that a morphism from a smooth projective $C$ of genus $g$ to $\PSig$ corresponds to what we call a \emph{non-degenerate $\Sigma$-collection}, up to an equivalence (which we will see soon). \begin{defn}
	We call a set of line bundles $\{\mathcal{L}_1,\ldots, \mathcal{L}_n\}$ to be \emph{$\Sigma$-line bundles} if there exists isomorphisms $\alpha_{e^j}: \bigotimes_{1\leq j\leq n} \mathcal{L}^{\langle e^j, v_{j}\rangle} \to \mathcal{O}_C$, where $\{e^j\}_{1\leq j\leq r}$ is a $\mathbb{Z}$-basis of $M$. 
	
	We call a set of triples $(\mathcal{L}_j,s_j, \alpha_{e^i})$ a \emph{non-degenerate} $\Sigma$-collection if \begin{enumerate}[i]
		\item line bundles $\mathcal{L}_{i}$ for each $\rho_i \in \Sigma(1)$,
		\item global sections $s_{i}\in H^0(C,\mathcal{L}_{i})$ for each $\rho_i \in \Sigma(1)$,
		\item \emph{compatibility isomorphisms:} $\alpha_{e^i}: \bigotimes_{\rho} \mathcal{L}^{\langle e^i, v_{j}\rangle} \to \mathcal{O}_C$,
		\item satisfying the \emph{nondegeneracy} condition: suppose $s_{j}^*: \mathcal{L}_{j}^{-1} \to \mathcal{O}_C$ denotes the map induced by the section $s_{j}$ for each $\rho_j \in \Sigma(1)$, then there is a surjection of locally free sheaves $$ \bigoplus_{\sigma\in \Sigma_{\mathrm{max}}} \big(\otimes_{\{j:\rho_j\notin \sigma(1)\}} s^*_{j}\big): \bigoplus_{\sigma \in \Sigma_{\mathrm{max}}} \big(\otimes_{\{j:\rho_j\notin \sigma(1)\}} \mathcal{L}_{j}^{-1}\big) \to \mathcal{O}_C.$$
		If the last condition of non-degeneracy is not satisfied we simply call it a \emph{$\Sigma$-collection}.
	\end{enumerate}
\end{defn}
A morphism $C\to\PSig$ of degree $\vec{d}$ corresponds to non-degenerate $\Sigma$-collection up to an equivalence that's given by:  $(\mathcal{L}_j,s_j, \alpha_{e^i})\sim  (\mathcal{L}'_j,s'_j, \alpha'_{e^i})$ if there exists a map of line bundles $f: \mathcal{L}_j\to \mathcal{L}'_j$ for all $i$ taking $s_j$ to $s'_j$ and $\alpha_{e^i}\to \alpha'_{e^i}$ for all $1\leq i\leq r, 1\leq j\leq n$. 
\begin{proof}[Proof of Theorem \ref{thm:CPSig}.]
\textsc{Step1:} \textit{Constructing some auxiliary toric bundles.}

Let us first define the space that would play the role in the higher genus case of what $\P_{\Sigma^{\vec{d}}}$ did for genus $0$.

For any integer $d$, let $$\nu: C\times \pic^d C \to \pic^d C$$ denote the projection to the second factor and let $P(d)$ denote a Poincare bundle of degree $d$ on $C\times \pic^d C$, the latter being isomorphic, as varieties, to the Jacobian of $C$ which we denote by $J(C)$. The moduli space of $\Sigma$-collections $(\mathcal{L}_i, s_i, \alpha_v)_{1\leq i \leq n}$ up to equivalence such that $\deg\, \mathcal{L}_i=d_i$, is naturally a \emph{toric bundle} given by $$\P_{\pic_{\Sigma}^{\vec{d}}(C)}(\oplus_i \nu_*P(d_i),\Sigma^{\vec{d}})$$ on $\pic^{\vec{d}}_{\Sigma}(C)$ which we define below. 
To see this note that elements in the space of $\Sigma$-collections have the following description $\{(\mathcal{L}_j, s_j, \alpha_{e^i})_{1\leq j\leq n, 1\leq i\leq r} \} \subset \Pi_i \nu_*P(d_i)\times \A^r$ where $r$ is the rank of $M$, and the equivalence relations are given by sections $\alpha_{e^i}$ themselves. Consider the subvariety of $\Pi_i \pic^{d_i} C$ given by $$\vartheta: \pic^{\vec{d}}_{\Sigma}(C):= \{(\mathcal{L}_1, \ldots, \mathcal{L}_n): \mathcal{L}_i\in \pic^{d_i}C, \,\,\, \otimes_i \mathcal{L}_i^{\langle v,u_i\rangle} \cong \mathcal{O}_C \text{ for all } v\in M\} \hookrightarrow \Pi_i \pic^{d_i} C.$$ Abusing notation and denoting by $\nu_*P(d_i)$ on $\Pi_i\pic^{d_i}(C)$ the vector bundle $\nu_*P(d_i)\boxtimes \mathcal{O}_{\pic^{d_j}(C)}^{\boxtimes{j\neq i}}$, we pull back the vector bundle $\oplus_i \nu_*P(d_i)$ on $\Pi_i \pic^{d_i}(C)$ along the inclusion, which gives us a vector bundle on  $\pic^{\vec{d}}_{\Sigma}$, which in turn, by definition, is the space of all $\Sigma$-collections up to equivalence (note that $\pic^{\vec{d}}_{\Sigma}(C)$ is smooth and isomorphic as an algebraic variety to $\Pi_{\{1\leq j\leq n-r\}} J(C)$).  
Note that when $d_i\geq 2g-1$ for all $i$ (an assumption we maintain for the rest of the paper),  the vector bundle $\oplus_i\vartheta^*\nu_*P(d_i)$ given by the pullback under the inclusion $\vartheta: \pic^{\vec{d}}_{\Sigma}(C)\hookrightarrow \Pi_i \pic^{d_i} C$, has rank $\sum_{j=1}^{n}(d_j-g+1)$ on $\pic^{\vec{d}}_{\Sigma}(C)$.

With the vector bundle on $\pic^{\vec{d}}_{\Sigma}(C)$ defined, to construct the proposed toric bundle $\P_{\pic_{\Sigma}^{\vec{d}}(C)}(\Pi_i \nu_*P(d_i),\Sigma^{\vec{d}})$ we now need to define an exceptional locus, and a reductive linear subgroup of a torus acting on it.
Define an \emph{exceptional set}, much like in the previous section:
	\[
\begin{tikzcd}
\mathcal{Z}_{\mathrm{Pic}^{\vec{d}}_{\Sigma}(C)}(\Sigma^{\vec{d}}) \arrow[hookrightarrow]{rr} \arrow[swap]{dr} & &\vartheta^*\big(\oplus_i\nu_*P(d_i)\big)\arrow{dl}{} \\[10pt]
	& \mathrm{Pic}^{\vec{d}}_{\Sigma}(C)
\end{tikzcd}
\]
such that $\mathcal{Z}_{\mathrm{Pic}^{\vec{d}}_{\Sigma}(C)}(\Sigma^{\vec{d}})$ is locally defined by the vanishing of the monomials coming from the primitive collections of $\Sigma^{\vec{d}}$, which brings us to the necessity of actually defining $\Sigma^{\vec{d}}$ and we do it now.

\noindent Let $\Sigma^{\vec{d}}$ be a rational polyhedral fan whose rays are given by:
\begin{gather*}
	\Sigma^{\vec{d}}(1): = \{\rho_{1,0}, \ldots, \rho_{1,d_1-g+1}, \\  \hspace{3mm}   \rho_{2,0}, \ldots, \rho_{2,d_2-g+1}, \\ \ldots,\\ \rho_{n,0}, \ldots, \rho_{n,d_n-g+1}\}.
\end{gather*} If $\{\rho_{i_1}, \ldots, \rho_{i_l}\}$ is a primitive collection in $\Sigma^{\vec{d}}(1)$ then 	\begin{gather*}
\{\rho_{i_1,0}, \ldots, \rho_{i_1,d_{i_1}-g+1}, \\  \hspace{3mm}   \rho_{i_2,0}, \ldots, \rho_{i_2,d_{i_2}-g+1}, \\ \ldots,\\ \rho_{i_l,0}, \ldots, \rho_{i_l,d_{i_l}-g+1}\}
\end{gather*} is a primitive collection in $\Sigma^{\vec{d}} (1)$, and furthermore, these are the only primitive collections. We define the exceptional set $\mathcal{Z}_{\mathrm{Pic}^{\vec{d}}_{\Sigma}}(\Sigma^{\vec{d}})$ locally over $\mathrm{Pic}^{\vec{d}}_{\Sigma}(C)$ as that cut out by monomials defined by all the primitive collections of $\Sigma^{\vec{d}}(1)$ (note again that defining the rays and the primitive collections, in turn, equivalently, the maximal cones, defines $\Sigma^{\vec{d}}$ uniquely). 

The reductive algebraic group $\mathrm{Hom}({\mathrm{Pic}}(\PSig), \mathbb{G}_m)$ naturally acts on  $$\vartheta^*\big(\oplus_i\nu_*P(d_i)\big)- Z_{\mathrm{Pic}^{\vec{d}}_{\Sigma}(C)}(\Sigma^{\vec{d}})$$ by virtue of its action on $\A^{n}-Z(\Sigma)$ and the resulting geometric quotient is our desired toric bundle $$\P_{\pic_{\Sigma}^{\vec{d}}(C)}(\oplus_i \vartheta^*\nu_*P(d_i),\Sigma^{\vec{d}}).$$
We finish Step 1 by recording the cohomology of these toric bundles (again, recall that $d_i\geq 2g$), which is, in a sense a result of similar nature as \cite[Theorem 1.7]{BPS} and is essentially an application of the Leray-Hirsch theorem for the cohomology of fibre bundles. For a proof refer to \cite{SU}.

 \begin{prop}\label{prop:CohomPSig:vecd}
	Let $\P_{\pic_{\Sigma}^{\vec{d}}(C)}(\oplus_i \vartheta^*\nu_*P(d_i),\Sigma^{\vec{d}})$ be as defined above. Then $$H^*(\P_{\pic^{\vec{d}}(\Sigma)} (\Sigma^{\vec{d}});\Qb) \cong H^*(J(C);\Qb)^{\otimes (n-r)}[\{D_{i,j}: 1\leq i\leq n, 0\leq j\leq a_i\}]/ I $$ where $D_{i,j}$ has cohomological degree $2$ and type $(1,1)$ and $I$ is the ideal generated by all \begin{enumerate}
		\item $D_{i_1,j_1}\cdot\ldots\cdot D_{i_k,j_k}$ for $\{\rho_{i_1}, \ldots, \rho_{i_k}\}$ a primitive collection in $\Sigma(1)$;
		\item $ \sum_{i=1}^{n} \langle e^j ,u_i\rangle D_{i,i'}- c_1(\mathcal{L}_{j})$ for each $1\leq j \leq r$, each  $0\leq i'\leq d_i$ and where $c_1(\mathcal{L}_i)$ denote the first Chern class of the torus invariant line bundles $\mathcal{L}_i$ defined above.
	\end{enumerate} 
\end{prop}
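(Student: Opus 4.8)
The space in the proposition is, by construction, a toric bundle $\pi\colon\P_X(\F,\Sigma^{\vec{d}})\to X$ with base $X:=\pic^{\vec{d}}_{\Sigma}(C)$, fibre the complete simplicial toric variety $\P_{\Sigma^{\vec{d}}}$ of Step~1, and structure bundle $\F:=\oplus_i\vartheta^*\nu_*P(d_i)$; since $d_i\ge 2g-1$ forces $H^1(C,\mathcal{L})=0$ for every degree-$d_i$ line bundle $\mathcal{L}$ on $C$, cohomology-and-base-change makes $\nu_*P(d_i)$ locally free of rank $d_i-g+1$, which is exactly the number of rays of $\Sigma^{\vec{d}}$ in the $i$-th family. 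So the plan is threefold: first compute $H^*(X;\Qb)$; then feed $X$, $\F$ and $\Sigma^{\vec{d}}$ into the Sankaran--Uma presentation of the cohomology of a toric bundle (\cite[Theorem~1.2]{SU}, quoted above); and finally unwind that abstract presentation into the stated one.

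For the base: by Step~1 one has $X\cong\Prod_{1\le j\le n-r}J(C)$ as algebraic varieties, so the Künneth formula gives $H^*(X;\Qb)\cong H^*(J(C);\Qb)^{\otimes(n-r)}$ as graded algebras carrying mixed Hodge structures, which is the first tensor factor in the claimed answer. For the toric-bundle step, I would apply \cite[Theorem~1.2]{SU} to $\pi$: it presents $H^*(\P_X(\F,\Sigma^{\vec{d}});\Qb)$ as the quotient of $H^*(X;\Qb)[\{D_\rho\}_{\rho\in\Sigma^{\vec{d}}(1)}]$ by the ideal generated by (a) the Stanley--Reisner monomials $\Prod_{\rho\in E'}D_\rho$, one per primitive collection $E'$ of $\Sigma^{\vec{d}}$, and (b) for each vector in a $\Z$-basis of the character lattice, a linear relation of the form ``(integer combination of the $D_\rho$) $-$ $c_1$ of the associated line bundle on $X$''. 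Now I translate: by the construction of $\Sigma^{\vec{d}}$ its rays are the $\rho_{i,i'}$, its primitive collections are exactly the families $\{\rho_{i_m,i'}\}$ inflated from the primitive collections $\{\rho_{i_1},\dots,\rho_{i_k}\}$ of $\Sigma$, and the lattice image of each $\rho_{i,i'}$ equals $u_i$; hence (a) becomes relations~(1) of the statement and the integer coefficients in (b) become the pairings $\langle e^j,u_i\rangle$. Finally, since $G(\Sigma)$ acts on the summand $\vartheta^*\nu_*P(d_i)$ through the character $g\mapsto g(D_i)$, the character $e^j$ scales that summand with weight $\langle e^j,u_i\rangle$, so the line bundle on $X$ attached to $e^j$ is $\otimes_i(\det\vartheta^*\nu_*P(d_i))^{\langle e^j,u_i\rangle}$, whose first Chern class is the class written $c_1(\mathcal{L}_j)$ in relation~(2); as every class in sight is algebraic and the $D_{i,i'}$ are divisor classes, the whole presentation is one of mixed Hodge structures with the $D_{i,i'}$ of type $(1,1)$, which completes the identification.

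The one point needing more than a verbatim citation — and the step I expect to be the main obstacle — is that $\F=\oplus_i\vartheta^*\nu_*P(d_i)$ is \emph{not} a sum of line bundles, while \cite[Theorem~1.2]{SU} is stated for split bundles. I would deal with this by the splitting principle: pull $\pi$ back along the iterated flag bundle $q\colon\mathrm{Fl}(\F)\to X$ on which each $\vartheta^*\nu_*P(d_i)$ acquires a full flag, run the split Sankaran--Uma computation there (the $D_{i,0},\dots,D_{i,d_i-g}$ becoming the Chern roots of $\vartheta^*\nu_*P(d_i)$), and descend along $q$ — legitimate because $q^*$ is injective on rational cohomology and $H^*(\mathrm{Fl}(\F);\Qb)$ is free over $H^*(X;\Qb)$, and because the Stanley--Reisner generators and the elementary-symmetric combinations entering (b) are symmetric in the Chern roots of each summand and hence lie in the image of $q^*$, so the quotient presentation descends. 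Equivalently one may phrase the whole argument as Leray--Hirsch: by Proposition~\ref{prop:CohomPSig:veca} the fibre $\P_{\Sigma^{\vec{d}}}$ has cohomology generated by classes that extend over the total space, so $H^*(\P_X(\F,\Sigma^{\vec{d}});\Qb)$ is the free $H^*(X;\Qb)$-module on a fibre basis, and the only new relations beyond the fibrewise ones are the Chern-class corrections coming from the clutching of $\F$ — exactly the $-c_1(\mathcal{L}_j)$ terms.
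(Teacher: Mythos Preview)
Your proposal is correct and matches the paper's approach: the paper does not give an independent proof of this proposition but simply refers to \cite{SU} and remarks that it is ``essentially an application of the Leray--Hirsch theorem for the cohomology of fibre bundles.'' You have in fact supplied more detail than the paper does, in particular your discussion of the splitting-principle workaround for the fact that $\oplus_i\vartheta^*\nu_*P(d_i)$ is not a sum of line bundles---the paper is silent on this point.
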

Observe that  $\P_{\mathrm{Pic}^{\vec{d}}C}(\oplus_i\nu_*P(d_i),\Sigma^{\vec{d}})$ is a toric bundle as well, over the base space $\pic^{\vec{d}}C:=\Pi_i\pic^{d_i}C$, corresponding the rational polyhedral fan $\Sigma^{\vec{d}}$ in the vector bundle $\oplus_i\nu_*P(d_i)$.

\vspace{7mm}

\textsc{Step 2: }\textit{Constructing a symmetric semi-simplicial space with auxiliary toric bundles.}

	Now we construct a hypercover over an indexing category $\Delta_{\mathrm{inj}} S$ (see \cite{Banerjee21} for more details) which admits universal cohomological descent. 
	
	To this end, we define spaces $\Tc_0$ and $\mathcal{I}_0$ as certain fibre products.
	First, consider the following commutative diagram:
	
		\adjustbox{scale=0.9,center}{
	\begin{tikzcd}
	\Tc_0 \arrow[rr] \arrow[dr,dashed, swap,"\pi_0"] \arrow[dd,swap] &&
		\bigsqcup_{1\leq j\leq t} C\times  \P_{{{\mathrm{Pic}^{{\vec{d}}-1_{E_j}}}}}(\nu_*P(\vec{d}-1_{E_j}),{\Sigma}^{\vec{d}}) \arrow[dd] \arrow[dr,"\oplus_j A_j"] \\
		 & \P_{\pic_{\Sigma}^{\vec{d}}C}(\oplus_i\vartheta^*\nu_*P(d_i),\Sigma^{\vec{d}}) \arrow[rr] \arrow[dd]&&
	 \P_{\mathrm{Pic}^{\vec{d}}C}(\nu_*P(\vec{d}),\Sigma^{\vec{d}}) \arrow[dd] \\
		\Ic_0 \arrow[rr,] \arrow[dr, dashed, "\oplus_j A_j"] && 	\bigsqcup_{1\leq j\leq t}  C\times {\mathrm{Pic}}^{\vec{d}-1_{E_j}}C  \arrow{dr}{\oplus_j A_j}\\
			& {\mathrm{Pic}}^{\vec{d}}_{\Sigma}C\arrow[hookrightarrow, rr] && \mathrm{Pic}^{\vec{d}}	C	 \end{tikzcd}
		}

\noindent	where ${\mathrm{Pic}^{{\vec{d}}}(C)}$ denotes the space $\Pi_i {\mathrm{Pic}}^{d_i}$, $\nu_*P(\vec{d}):= \oplus_i\nu_*P(d_i)$; $\mathrm{Pic}^{{\vec{d}-1_{E_j}}}C$, for a primitive collection $E_j$, denotes the cartesian product of Picard varieties of degree $d_i$ if $\rho_i\notin E_j$, and $d_i-1$ if $\rho_i\in E_j$ and $\nu_*P(\vec{d}-1_{E_j})$ likewise denotes the corresponding vector bundle of global sections; and the maps above are defined by:

	\begin{gather}\label{eq:otimeslambdavec}
		A_j: C\times {\mathrm{Pic}^{{\vec{d}}-1_{E_j}}}C\to {\mathrm{Pic}^{{\vec{d}}}} C\nonumber \\
		x, (L_1, \ldots, L_n)\mapsto L_1\otimes \mathcal{O}_C(x), \cdots, L_n\otimes \mathcal{O}_C(x)
	\end{gather} which is the map of `adding a point': 

$\Ic_0 $ completes the commutative square in the `lower face' of the cube and we abuse notation and still denote the resulting `adding a point' map by $\oplus_j A_j: \Ic_0\to {\mathrm{Pic}}^{\vec{d}}$; the `upper face' of the cube consists of spaces which are essentially the space of global sections of suitable Poincare bundles, i.e. the vertical arrows all correspond to taking fibrewise global sections over the moduli of line bundles; and $\Tc_0$, quite like $\Ic_0$, is defined to complete the square on the upper face of the cube, and admits natural map to $\Ic_0$ so that each of the side faces are also naturally commutative squares. Whereas the `adding a point' map on the lower face of the cube adds points to line bundles, on the upper level they effectively add `basepoints' to global sections; in other words $$\pi_0: \Tc_0\to \P_{{\mathrm{Pic}}^{\vec{d}}(\Sigma)}(\Sigma^{\vec{d}}) $$ simply captures the notion of adding a basepoint.
	
	We define spaces $\Tc_p$ for all $p\geq 0$ likewise. Consider  the following commutative diagram:
	
	\adjustbox{scale=0.9,center}{%
		\begin{tikzcd}[column sep=0]
			\Tc_p \arrow[rr] \arrow[dr,dashed, swap,"\pi_p"] \arrow[dd,swap] &&
			\bigsqcup_{1\leq j_0,\ldots, j_p\leq t} C^{p+1}\times  \P_{{{\mathrm{Pic}^{{\vec{d}}-\sum_{k=0}^{p} 1_{E_{j_k}}}}}C}(\nu_*P({\vec{d}}-\sum_{k=0}^{p} 1_{E_{j_k}}),\Sigma^{\vec{d}}) \arrow[dd] \arrow[dr, "A"] \\
			& \P_{\pic_{\Sigma}^{\vec{d}}C}(\oplus_i\vartheta^*\nu_*P(d_i),\Sigma^{\vec{d}})    \arrow[rr] \arrow[dd]&&
				 \P_{\mathrm{Pic}^{\vec{d}}C}(\nu_*P(\vec{d}),\Sigma^{\vec{d}})  \arrow[dd,"h"] \\
			\Ic_p \arrow[rr,] \arrow[dr, dashed, "A"] &&  \bigsqcup_{1\leq j_0,\ldots, j_p\leq t} C^{p+1}\times\mathrm{Pic}^{\vec{d}-1_{E_{j_k}}}C \arrow{dr}{A}\\
			&  {\mathrm{Pic}}^{\vec{d}}_{\Sigma}C\arrow[hookrightarrow, rr] && \mathrm{Pic}^{\vec{d}}	C
		\end{tikzcd}
	}
	where $A$, by abuse of notation, now denotes the addition of $(p+1)$ points to a line bundle i.e. it's the adding of points for all $1\leq j_0,\ldots, j_p\leq t$: \begin{gather}
		A:   \bigsqcup_{1\leq j_0,\ldots, j_p\leq t} C^{p+1}\times\mathrm{Pic}^{\vec{d}-1_{E_{j_k}}}C  \to \mathrm{Pic}^{\vec{d}} C	\nonumber \\
	(x_0,\ldots, x_p), (L_0, \ldots, L_N)\mapsto L_0(\sum x_i), \cdots, L_N(\sum x_i);
	\end{gather}
	
	\noindent and moreover observe that we have commutative cubes of the following form for all $p$:
	
	\adjustbox{scale=0.7,center}{%
		\begin{tikzcd}[column sep=0.5]
			\Tc_p \arrow[rr] \arrow[dr, swap,"f_i"] \arrow[dd,swap] &&
		\bigsqcup_{1\leq j_0,\ldots, j_p\leq t} C^{p+1}\times  \P_{{{\mathrm{Pic}^{{\vec{d}}-\sum_{k=0}^{p} 1_{E_{j_k}}}}}C}(\nu_*P({\vec{d}}-\sum_{k=0}^{p} 1_{E_{j_k}}),\Sigma^{\vec{d}})  \arrow[dd] \arrow[dr] \\
			& \Tc_{p-1}  \arrow[rr] \arrow[dd]&&
			\bigsqcup_{1\leq j_0,\ldots, j_{p-1}\leq t} C^{p}\times  \P_{{{\mathrm{Pic}^{{\vec{d}}-\sum_{k=0}^{p-1} 1_{E_{j_k}}}}}C}(\nu_*P({\vec{d}}-\sum_{k=0}^{p-1} 1_{E_{j_k}}),\Sigma^{\vec{d}}) \arrow[dd] \\
			\Ic_p \arrow[rr,] \arrow[dr, dashed, "A"] &&  \bigsqcup_{1\leq j_0,\ldots, j_p\leq t} C^{p+1}\times  {{{\mathrm{Pic}^{{\vec{d}}-\sum_{k=0}^{p} 1_{E_{j_k}}}}}}  \arrow{dr}{A}\\
			& \Ic_{p-1}\arrow[rr] && 	 \bigsqcup_{1\leq j_0,\ldots, j_{p-1}\leq t} C^{p}\times  {{{\mathrm{Pic}^{{\vec{d}}-\sum_{k=0}^{p-1} 1_{E_{j_k}}}}}}
		\end{tikzcd}
	} and the all the down-right arrows are maps corresponding to adding a point from the $i^{th}$ factor of $C^{p+1}$, called the \emph{face maps} and we  thus obtain $f_i:\Tc_p\to \Tc_{p-1}$ as the $i^{th}$ face map of the semisimplicial space $\Tc_{\bullet}$. 
	
	It is easy to check that $$\Ic_p \cong \bigsqcup_{1\leq j_0,\ldots, j_p\leq t} C^{p+1}\times (J(C))^{n-r}$$ for all $p\geq 0$. 
Similarly it is not hard to show for $p\leq n-2g+1$ that $$\Tc_p\cong  \bigsqcup_{1\leq j_0,\ldots, j_p\leq t} C^{p+1}\times M_{j_0,\ldots, j_p}$$ (the isomorphism is not canonical) for some \emph{toric sub-bundle} $ M_{j_0,\ldots, j_p}$ whose codimension in $\P_{\pic^{\vec{d}}(\Sigma)}(\Sigma^{\vec{d}})$ is $p$, in the following way. 
	For any line bundle $\mathcal{L}$ on $C$ and any effective divisor $D$ on $C$ there is a natural injection \begin{gather*}
		H^0(C,\mathcal{L})\to H^0(C, \mathcal{L}(D))\\ s\mapsto s(D)
	\end{gather*} coming from the short exact sequence of the corresponding locally free sheaves $$0\to \mathcal{L}\to \mathcal{L}(D)\to \mathcal{O}_D\to 0;$$ by definition we have the $\{j_0,\ldots, j_p\}^{th}$ component of $\mathcal{T}_p$ is given by: \begin{gather*}
	\{[s_1:\ldots:s_n], ((x_0,\cdots, x_p),[\widetilde{s_0}:\ldots: \widetilde{s_N}]): s_i\in H^0(C,\mathcal{L}_i) \text{ where } \mathcal{L}_i\in {\mathrm{Pic}}^{d_i},\\ \widetilde{s_i}\in H^0(C,\mathcal{L}'_i) \text{ where } \{\mathcal{L}'_i\}\in {\mathrm{Pic}}^{\vec{d} -\sum_{l=0}^{p} 1_{j_l}},\\ \widetilde{s_i}(\sum x_j)=g(D_i) s_i \text{ for some } g\in \mathrm{Hom}(\pic(\PSig),\mathbb{G}_m)\text{ and for all }i\}.
	\end{gather*} Now fix a general unordered $(p+1)$ subset $\{z_0,\cdots, z_p\}\in C$ and define \begin{gather*}
		M_{j_0,\ldots,j_p}:= \{[\widetilde{s_1}:\ldots: \widetilde{s_n}]: \widetilde{s_i}\in H^0(C,\mathcal{L}_i) \text{ where } \mathcal{L}_i\in \pic^{d_i} C\text{ there exists } \\ g\in \mathrm{Hom}(\pic(\PSig),\mathbb{G}_m) \text{ such that for each }i,\,\,\, \\\widetilde{s_i}(\sum_j z_j)=g(D_i) s_i,\\ \text{ where } s_i\in H^0(C,\mathcal{L}_i), \,\,\,\,  \mathcal{L}\in {\mathrm{Pic}}^{d_i} C\}
	\end{gather*} Note that $M_{j_0,\ldots,j_p}$ is a priori a toric bundle on $\Pi_i \pic^{d_i}C$, which we pullback to $\pic^{\vec{d}}_{\Sigma}(C)$ and still denote the pullback by $M_{j_0,\ldots,j_p}$.
	Then one can define a bijective (easy to check) morphism that only depends on the choice of the points $z_0,\ldots, z_p \in C$: \begin{gather}
	\bigsqcup_{1\leq j_0,\ldots, j_p\leq t}	C^{p+1} \times M_{j_0,\ldots,j_p}\to \Tc_p \nonumber \\ (x_0,\ldots, x_p), [\widetilde{s_1}:\ldots: \widetilde{s_n}]\mapsto\nonumber\\ [\widetilde{s_1}(\sum_j x_j):\ldots: \widetilde{s_n}(\sum_j x_j)], ((x_0,\cdots, x_p), [\widetilde{s_1}:\ldots: \widetilde{s_n}])
	\end{gather} where the only non-trivial part is to check that the map is well-defined and that fact follows from the observation that degree $0$ line bundles always have $\lambda^{th}$ roots for all positive integers $\lambda$. 
Let  $$M_p:= 	\bigsqcup_{1\leq j_0,\ldots, j_p\leq t}  M_{j_0,\ldots,j_p}$$ which is, again, naturally a disjoint union of toric bundles on $\pic^{\vec{d}}_{\Sigma}C$, with fibres isomorphic to a complete simplicial toric variety. Henceforth we denote by $\oplus_i \nu_*P(d_i)$ by $\nu_* P(\vec{d})$, and $\nu_* P(\vec{d}-\sum_j1_{E_j})$ are likewise defined as its sub-vector bundles. 
In other words, by our notations, $$M_p\cong \P_{\pic^{\vec{d}}_{\Sigma}(C)}\big(\oplus_{1\leq j_0,\ldots, j_p\leq t}\vartheta^*\nu_* P\big(\vec{d}-\sum_{l=0}^{p} 1_{E_{j_l}}\big),\Sigma^{\vec{d}}\big),$$ and for all $0\leq p\leq n-2g+1$ we abuse notation and denote the fibre bundle by $\varphi$  for all $p$ i.e. $$\varphi: M_p\to \pic^{\vec{d}}_{\Sigma}(C).$$
	
	

\vspace{7mm}

\textsc{Step 3: }\textit{Analysing the resulting spectral sequence.}	Deviating from the norm, we set $\Tc_{-1} \coloneqq \P_{\pic^{\vec{d}}(\Sigma)}(\oplus_i \vartheta^* \nu_*P(d_i), \Sigma^{\vec{d}})$.
	From \cite[Definition 2.1]{Banerjee21}, $\Tc_{\bullet}$ satisfies the conditions of being a $\Delta S$ object. We therefore use \cite[Theorem 1.2]{Banerjee21} to get a second quadrant spectral sequence that reads as: 
	\begin{gather*}
		E_1^{-p,q}= \oplus_i H^{q-2p(\epsilon_i-1)}\big(\Tc_p\otimes \mathit{sgn}_{p+1}\big)^{{S}_{(p+1)}}(-(p+1)(\epsilon_i-1)) \implies H^{q+p}(\cmord).
	\end{gather*}
	where $\mathit{sgn}_{p+1}$ denote the sign action of the symmetric group $S_{p+1}$ on the $(p+1)$ factors of $\Tc_p$ by permutation; $\epsilon_i$ is the cardinality of the primitive collection $E_i$ and the differentials of this spectral sequence are given by the alternating sum of the Gysin pushforwards induced by the face maps. So we split the rest of the proof into the following parts:
	\begin{enumerate}[i.]
		\item \textit{Computing the $E_1$ terms.} To this end note that a complete understanding of $$H^*(\P_{\pic^{\vec{d}}_{\Sigma}(C)}\big(\oplus_{1\leq j_0,\ldots, j_p\leq t}\vartheta^*\nu_* P\big(\vec{d}-\sum_{l=0}^{p} 1_{E_{j_l}}\big),\Sigma^{\vec{d}}\big))$$ (at least in a stable range) gives us full knowledge of the $E_1$ terms (in that range).  The Chern classes of $\nu_* P(d_i)$ can be computed for example, directly using Grothendieck-Riemann-Roch, or via ad-hoc methods: $$c_i(\nu_* P(d_j)) = (-1)^i {\theta^i \over i!}\,\,\, i=0, \ldots, g$$ for all $j$,
		where $\theta$ is the fundamental class of the theta divisor (several proofs are available in \cite[Sections 4, 5, Chapter VII and Section 1, Chapter VIII]{ACGH}). Using the Whitney sum formula and the functoriality of Chern classes under pullbacks, we can express the Chern classes of $\oplus_i \nu_* P(d_i)_{z_0+\cdots+z_p}$ entirely in terms of $\theta$.
		
		
		\noindent Let $D_{i,j}$ be the usual generators of $H^*(\P_{\pic^{\vec{d}}_{\Sigma}(C)}\big(\oplus_{1\leq j_0,\ldots, j_p\leq t}\vartheta^*\nu_* P\big(\vec{d}-\sum_{l=0}^{p} 1_{E_{j_l}}\big),\Sigma^{\vec{d}}\big))$ (see Proposition \ref{prop:CohomPSig:vecd}) where $1\leq i\leq n$ and for each $i$, we have $1\leq\ j\leq d_i-g+1$, then $H^*(\P_{\pic^{\vec{d}}_{\Sigma}(C)}\big(\oplus_{1\leq j_0,\ldots, j_p\leq t}\vartheta^*\nu_* P\big(\vec{d}-\sum_{l=0}^{p} 1_{E_{j_l}}\big),\Sigma^{\vec{d}}\big))$, which is an algebra on $H^*(\pic^{\vec{d}}_{\Sigma}C)\cong\big(\wedge(H^1(C))\big)^{\otimes (n-r)}$, is given by Proposition \ref{prop:CohomPSig:vecd}.
		

		Let $p$ be such that $\min d_i-p\geq 2g$. Then we have a complete description of the $E_1$ terms of the second quadrant spectral sequence above up to the $-p^{th}$ column because $\oplus_{1\leq j_0,\ldots, j_p\leq t}\vartheta^*\nu_* P\big(\vec{d}-\sum_{l=0}^{p} 1_{E_{j_l}}\big)$ is a toric bundle on $\pic^{\vec{d}}_{\Sigma}C$.
		
		\item \textit{Computing the differentials $d_1^p: E_1^{-p,*}\to E_1^{-(p-1),*+2N}$.}
		
		Following previously introduced notations, let $D_1,
	\ldots, D_n$ denote the usual generators of cohomology of $\PSig$, the corresponding torus invariant divisors on the torus bundle $\P_{\pic^{\vec{d}}_{\Sigma}(C)}\big(\vartheta^*\nu_* P(\vec{d}),\Sigma^{\vec{d}}\big)$ being $D_{i,j}, 1\leq i\leq n, 0\leq j\leq d_i-g+1$ and for all $p$ satisfying $n-p\geq 2g$, let \begin{gather*}
		\iota:\P_{\pic^{\vec{d}}_{\Sigma}(C)}\big(\oplus_{1\leq j_0,\ldots, j_p\leq t}\vartheta^*\nu_* P\big(\vec{d}-\sum_{l=0}^{p} 1_{E_{j_l}}\big),\Sigma^{\vec{d}}\big) \to\\ \P_{\pic^{\vec{d}}_{\Sigma}(C)}\big(\oplus_{1\leq j_0,\ldots, j_{p-1}\leq t} \vartheta^*\nu_* P\big(\vec{d}-\sum_{l=0}^{p-1} 1_{E_{j_l}}\big),\Sigma^{\vec{d}}\big)
	\end{gather*}denote the closed embedding induced by adding a basepoint. Finally, let $e\in H^2(C)$ be the class of a point, $\mathbb{1}$ the fundamental class of $C$, and let $\gamma_1,\ldots, \gamma_{2g}$ be the standard basis of $H^1(C)$ and because $H^*(J(C)) \cong \wedge H^1(C)$, let $\overline{\gamma_1},\ldots, \overline{\gamma_{2g}}$ be the image of $\gamma_1,\ldots, \gamma_{2g}$ under the aforementioned isomorphism. 
		
		First, we observe that \begin{flalign*}
			d_1^{1}: \oplus_{1\leq j\leq t}H^{*-2(\epsilon_j-1)}(C\times \P_{\pic^{\vec{d}}_{\Sigma}(C)}\big(\vartheta^*\nu_* P\big(\vec{d}-1_{E_{j}}\big),\Sigma^{\vec{d}}\big)) \to H^*(\P_{\pic^{\vec{d}}_{\Sigma}(C)}\big(\vartheta^*\nu_* P(\vec{d}),\Sigma^{\vec{d}}\big))\\ \text{ given by}\\
			[C\times\P_{\pic^{\vec{d}}_{\Sigma}(C)}\big(\oplus_{1\leq j\leq t}\vartheta^*\nu_* P\big(\vec{d}-1_{E_{j}}\big),\Sigma^{\vec{d}}\big)] \mapsto \sum_{k=1}^{t}\Big(\sum_{j=1}^{\epsilon_k} D_{k_1}\cdot \ldots \cdot \widehat{D_{k_j}}\cdot \ldots \cdot D_{k_{\epsilon_k}}\Big)
			\\ e\mapsto  \sum_{k=1}^{t} D_{k_1}\cdot\ldots \cdot D_{k_{\epsilon_k}}
			\\ \gamma_i\mapsto \sum_{k=1}^{t} \Big(\sum_{\rho_i\in E_k}\overline{\gamma_i}\Big) \Big(\sum_{j=1}^{\epsilon_k} D_{k_1}\cdot \ldots \cdot \widehat{D_{k_j}}\cdot \ldots \cdot D_{k_{\epsilon_k}}\Big),  \text{ for all }i.
		\end{flalign*} is a map of $H^*(\P_{\pic^{\vec{d}}_{\Sigma}(C)}\big(\vartheta^*\nu_* P(\vec{d}),\Sigma^{\vec{d}}\big))$-modules, and in turn \begin{align*}
\iota^*\alpha + e \iota^*\beta + \sum_{i=1}^{2g} \gamma_i \iota^* \gamma_i \xmapsto{d_1^1} \alpha \sum_{k=1}^{t}\Big(\sum_{j=1}^{\epsilon_k} D_{k_1}\cdot \ldots \cdot \widehat{D_{k_j}}\cdot \ldots \cdot D_{k_{\epsilon_k}}\Big)\\+ \beta \sum_{k=1}^{t} D_{k_1}\cdot\ldots \cdot D_{k_{\epsilon_k}}+ \\ \sum_{i=1}^{2g}  \sum_{k=1}^{t} \Big(\sum_{\rho_i\in E_k}\gamma_i\overline{\gamma_i}\Big) \Big(\sum_{j=1}^{\epsilon_k} D_{k_1}\cdot \ldots \cdot \widehat{D_{k_j}}\cdot \ldots \cdot D_{k_{\epsilon_k}}\Big),
	\end{align*} where $\alpha, \beta, \gamma_1,\ldots, \gamma_{2g} \in  H^*(\P_{\pic^{\vec{d}}_{\Sigma}(C)}\big(\vartheta^*\nu_* P(\vec{d}),\Sigma^{\vec{d}}\big))$.
		Indeed, the justification for the formula for $d_1^1$ in the previous case of $C=\P^1$ holds almost verbatim here. Clearly $$\iota^*: H^*(\P_{\pic^{\vec{d}}_{\Sigma}(C)}\big(\vartheta^*\nu_* P(\vec{d}),\Sigma^{\vec{d}}\big)) \to H^*(\P_{\pic^{\vec{d}}_{\Sigma}(C)}\big(\oplus_{1\leq j\leq t}\vartheta^*\nu_* P\big(\vec{d}-1_{E_{j}}\big),\Sigma^{\vec{d}}\big)) $$ is a surjection; next, the image under adding a particular basepoint is rationally equivalent, and in turn cohomologous, to (a multiple of) $ \sum_{k=1}^{t} D_{k_1}\cdot\ldots \cdot D_{k_{\epsilon_k}}$, and finally, that the image of the fundamental class $$[C\times\P_{\pic^{\vec{d}}_{\Sigma}(C)}\big(\oplus_{1\leq j\leq t}\vartheta^*\nu_* P\big(\vec{d}-1_{E_{j}}\big),\Sigma^{\vec{d}}\big)] \in H^0(C\times\P_{\pic^{\vec{d}}_{\Sigma}(C)}\big(\oplus_{1\leq j\leq t}\vartheta^*\nu_* P\big(\vec{d}-1_{E_{j}}\big),\Sigma^{\vec{d}}\big))$$is rationally equivalent, and thus cohomologous, to (a multiple of)  $$\sum_{k=1}^{t}\Big(\sum_{j=1}^{\epsilon_k} D_{k_1}\cdot \ldots \cdot \widehat{D_{k_j}}\cdot \ldots \cdot D_{k_{\epsilon_k}}\Big)$$ which is because the Poincaré bundle  is $\nu$-relative ample on the Picard variety.
		Recall that a $P(n)$ is $\nu$-relatively very ample for all $n\geq 2g-1$, which in turn induces a relative embedding of $C\times {\mathrm{Pic}}^nC\xrightarrow{i_n}\P (\nu_* P(n))$ over ${\mathrm{Pic}}^n C$ and we have a natural sequence of maps over ${\mathrm{Pic}}^{\vec{d}}_{\Sigma} C$
		\[
		\begin{tikzcd}
			C\times {\mathrm{Pic}}^{\vec{d}}_{\Sigma} C\arrow{rr}{i_{\vec{d}}} \arrow[swap]{dr}{\nu} & &  \P_{ {\mathrm{Pic}}^{\vec{d}}_{\Sigma} C }(\vartheta^*\nu_* P(\vec{d}),\Sigma^{\vec{d}})\arrow{dl}{} \\[10pt]
			& {\mathrm{Pic}}^{\vec{d}}_{\Sigma} C
		\end{tikzcd}
		\] 
	 In turn, the image of  $[C\times\P_{\pic^{\vec{d}}_{\Sigma}(C)}\big(\oplus_{1\leq j\leq t}\vartheta^*\nu_* P\big(\vec{d}-1_{E_{j}}\big),\Sigma^{\vec{d}}\big)]$ under the Gysin map ${f_0}_*$ is given by \begin{align*}
			{f_0}_*\big([C\times\P_{\pic^{\vec{d}}_{\Sigma}(C)}\big(\oplus_{1\leq j\leq t}\vartheta^*\nu_* P\big(\vec{d}-1_{E_{j}}\big),\Sigma^{\vec{d}}\big)] = \sum_{k=1}^{t} D_{k_1}\cdot\ldots \cdot D_{k_{\epsilon_k}}\frown i_{\vec{d}}(C\times {\mathrm{Pic}}^{\vec{d}}_{\Sigma} C)\\ =\sum_{k=1}^{t}\Big(\sum_{j=1}^{\epsilon_k} D_{k_1}\cdot \ldots \cdot \widehat{D_{k_j}}\cdot \ldots \cdot D_{k_{\epsilon_k}}\Big)
		\end{align*}
		Yet again, for the sake of simplicity we won't bother ourselves with the scalar multiples, which is fine because we're working over $\Q$.
		Noting that $$\overline{\gamma_i}(e-D_j)+ D_j(\gamma_i-\overline{\gamma_i}) = \gamma_i D_j- e \overline{\gamma_i},$$ it is now easy to check that the kernel of $d_1^1$ is given by: \begin{flalign*}
		 \oplus_{1\leq j\leq t}H^{*}(\P_{\pic^{\vec{d}}_{\Sigma}(C)}\big(\vartheta^*\nu_* P\big(\vec{d}-1_{E_{j}}\big),\Sigma^{\vec{d}}\big)) (e-D_j) [2(\epsilon_j-1)]\\ \bigoplus   \\ 	
		  \oplus_{1\leq j\leq t}H^{*}(\P_{\pic^{\vec{d}}_{\Sigma}(C)}\big(\vartheta^*\nu_* P\big(\vec{d}-1_{E_{j}}\big),\Sigma^{\vec{d}}\big)) (\gamma_i-\overline{\gamma_i})[2(\epsilon_j-1)], &&(i= 1, \ldots, 2g)
		\end{flalign*} where $[2(\epsilon_j-1)]$ denotes a shift in the cohomological degree by $2(\epsilon_j-1)$. The cokernel of $d^1_1$, which forms $E_2^{0,*}$ is isomorphic to  $$H^*(J(C))^{\otimes (n-r)}[D_1,\ldots, D_n]\over I$$ where $I$ is the ideal generated by the following elements:
	\begin{itemize}
		\item $\sum_{j=1}^{\epsilon_k} D_{k_1}\cdot \ldots \cdot \widehat{D_{k_j}}\cdot \ldots \cdot D_{k_{\epsilon_k}}$ for each primitive collection $E_k$;
		\item $\sum_{i=1}^{n} \langle e^j,u_i\rangle D_{i}$ for all $1\leq j\leq r$.
	\end{itemize}
Note that the Chern classes from Proposition \ref{prop:CohomPSig:vecd} do not appear in the cokernel because they are non-zero only up to degree $g$.
		
		Now we work out the differential for $p=2$ by computing the Gysin pushfowards by each of the face maps: \begin{gather*}
			{f_0}_*(\mathbb{1}\otimes e) = e \sum_{k=1}^{t}\Big(\sum_{j=1}^{\epsilon_k} D_{k_1}\cdot \ldots \cdot \widehat{D_{k_j}}\cdot \ldots \cdot D_{k_{\epsilon_k}}\Big), \\  
			{f_1}_*(\mathbb{1}\otimes e) =  \sum_{k=1}^{t} D_{k_1}\cdot\ldots \cdot D_{k_{\epsilon_k}} \\
			\implies d_1^2 (\mathbb{1}\otimes e) =   \sum_{k=1}^{t}\Big(\sum_{j=1}^{\epsilon_k} (e-D_{k_j}) D_{k_1}\cdot \ldots \cdot \widehat{D_{k_j}}\cdot \ldots \cdot D_{k_{\epsilon_k}}\Big); \\     
			{f_0}_*(e\otimes \gamma_i) = \gamma_i  \sum_{k=1}^{t} D_{k_1}\cdot\ldots \cdot D_{k_{\epsilon_k}}  \\    
			{f_1}_*(e\otimes \gamma_i) = e \overline{\gamma_i}\sum_{k=1}^{t}\Big(\sum_{j=1}^{\epsilon_k} D_{k_1}\cdot \ldots \cdot \widehat{D_{k_j}}\cdot \ldots \cdot D_{k_{\epsilon_k}}\Big) \\ 
			\implies d^2_1(e\otimes \gamma_i)=  \sum_{k=1}^{t}\Big(\sum_{j=1}^{\epsilon_k} (\gamma_i D_{k_j}- e\overline{\gamma_i})D_{k_1}\cdot \ldots \cdot \widehat{D_{k_j}}\cdot \ldots \cdot D_{k_{\epsilon_k}}\Big);\\
			{f_0}_*(\mathbb{1}\otimes \gamma_i) = \gamma_i  \sum_{k=1}^{t}\Big(\sum_{j=1}^{\epsilon_k} D_{k_1}\cdot \ldots \cdot \widehat{D_{k_j}}\cdot \ldots \cdot D_{k_{\epsilon_k}}\Big) \\    {f_1}_*(\mathbb{1}\otimes \gamma_i) = \overline{\gamma_i} \sum_{k=1}^{t}\Big(\sum_{j=1}^{\epsilon_k} D_{k_1}\cdot \ldots \cdot \widehat{D_{k_j}}\cdot \ldots \cdot D_{k_{\epsilon_k}}\Big) \\ 
			\implies d^2_1(\mathbb{1}\otimes \gamma_i) =  (\gamma_i -  \overline{\gamma_i}) \sum_{k=1}^{t}\Big(\sum_{j=1}^{\epsilon_k} D_{k_1}\cdot \ldots \cdot \widehat{D_{k_j}}\cdot \ldots \cdot D_{k_{\epsilon_k}}\Big); \\  \text{ and }d_1^2(\gamma_i\gamma_j)=0,
		\end{gather*} where the last equality follows form the fact that on $\Sym^p H^1(C)$ for $p\geq 2$, the alternating sum of face maps is, by definition, $0$.
Next we see that the $E_2^{-1,*}$ terms, as an $\oplus_{1\leq j,l\leq t}H^{*-2(\epsilon_j+\epsilon_l-2)}(C\times \P_{\pic^{\vec{d}}_{\Sigma}(C)}\big(\vartheta^*\nu_* P\big(\vec{d}-1_{E_{j}-1_{E_l}}\big),\Sigma^{\vec{d}}\big)) )$-module, are given by:
		\begin{align*}
		\oplus_{1\leq j\leq t}	{H^*(J(C)^{\times (n-r)};\Q(-(\epsilon_j-1)))[D_1,\ldots, D_n]\over I}(e-D_j)[2(\epsilon_j)] \\ 
		\bigoplus_{1\leq i\leq 2g} 		\oplus_{1\leq j\leq t} {H^*(J(C)^{\times (n-r)};\Q(-(\epsilon_j-1)))[D_1,\ldots, D_n]\over I}(\gamma_i-\overline{\gamma_i})[2(\epsilon_j)] 
		\end{align*} where $I$ is an ideal generated by elements of the form	\begin{itemize}
		\item $\sum_{j=1}^{\epsilon_k} D_{k_1}\cdot \ldots \cdot \widehat{D_{k_j}}\cdot \ldots \cdot D_{k_{\epsilon_k}}$ for each primitive collection $E_k$;
		\item $\sum_{i=1}^{n} \langle e^j,u_i\rangle D_{i}$ for all $1\leq j\leq r$.
	\end{itemize}
	
	Whereas the kernel of $d_1^2$ is generated by exactly what one expects: as a $\oplus_{1\leq j,l\leq t}H^{*-2(\epsilon_j+\epsilon_l-2)}(C\times \P_{\pic^{\vec{d}}_{\Sigma}(C)}\big(\vartheta^*\nu_* P\big(\vec{d}-1_{E_{j}-1_{E_l}}\big),\Sigma^{\vec{d}}\big)) )$-module, we have  \begin{align*}
			\mathrm{Ker}(d_1^2)= \Bigg( \bigoplus_{1\leq i\leq 2g} \oplus_{1\leq j,l\leq t}H^{*-2(\epsilon_j+\epsilon_l-2)}(C\times \P_{\pic^{\vec{d}}_{\Sigma}(C)}\big(\vartheta^*\nu_* P\big(\vec{d}-1_{E_{j}-1_{E_l}}\big),\Sigma^{\vec{d}}\big))\cdot\\ \big(e\otimes \gamma_i  - 1\otimes \gamma_i D_j +\mathbb{1}\otimes e \overline{\gamma_i}\big)[2(\epsilon_j-1)+2(\epsilon_l-1)]\Bigg)\\ \Bigg(\bigoplus_{1\leq i,j\leq 2g}  \oplus_{1\leq j,l\leq t}H^{*-2(\epsilon_j+\epsilon_l-2)}(C\times \P_{\pic^{\vec{d}}_{\Sigma}(C)}\big(\vartheta^*\nu_* P\big(\vec{d}-1_{E_{j}-1_{E_l}}\big),\Sigma^{\vec{d}}\big))\cdot\\ (\gamma_i\gamma_j)[2(\epsilon_j-1)+2(\epsilon_l-1)]\Bigg).
		\end{align*}
		For $p= 3$ we have $d_1^{3}: E_1^{-3,*}\to E_1^{-2,*}$ given by:
		\begin{gather*}
			d_1^3 (\mathbb{1}\otimes e \otimes \gamma_i) =   \\
			e\otimes \gamma_i  \sum_{k=1}^{t}\Big(\sum_{j=1}^{\epsilon_k} D_{k_1}\cdot \ldots \cdot \widehat{D_{k_j}}\cdot \ldots \cdot D_{k_{\epsilon_k}}\Big)- \\ \mathbb{1}\otimes \gamma_i  \sum_{k=1}^{t} D_{k_1}\cdot\ldots \cdot D_{k_{\epsilon_k}}  +\\
			\mathbb{1}\otimes e \overline{\gamma_i} \sum_{k=1}^{t}\Big(\sum_{j=1}^{\epsilon_k} D_{k_1}\cdot \ldots \cdot \widehat{D_{k_j}}\cdot \ldots \cdot D_{k_{\epsilon_k}}\Big)\\
			\impliedby \begin{cases}
				{f_0}_*(\mathbb{1}\otimes e \otimes \gamma_i) = e\otimes \gamma_i \sum_{k=1}^{t}\Big(\sum_{j=1}^{\epsilon_k} D_{k_1}\cdot \ldots \cdot \widehat{D_{k_j}}\cdot \ldots \cdot D_{k_{\epsilon_k}}\Big),\\ {f_1}_*(\mathbb{1}\otimes e\otimes \gamma_i) = \mathbb{1}\otimes \gamma_i \sum_{k=1}^{t} D_{k_1}\cdot\ldots \cdot D_{k_{\epsilon_k}} \\ {f_2}_*(\mathbb{1}\otimes e\otimes \gamma_i) = \mathbb{1}\otimes e \overline{\gamma_i} \sum_{k=1}^{t}\Big(\sum_{j=1}^{\epsilon_k} D_{k_1}\cdot \ldots \cdot \widehat{D_{k_j}}\cdot \ldots \cdot D_{k_{\epsilon_k}}\Big)
			\end{cases}\\
			d_1^3(e\otimes \gamma_i\gamma_j) = \gamma_i\gamma_j \sum_{k=1}^{t} D_{k_1}\cdot\ldots \cdot D_{k_{\epsilon_k}} , \\
			d_1^3(\mathbb{1}\otimes \gamma_i\gamma_j) = \gamma_i\gamma_j  \sum_{k=1}^{t}\Big(\sum_{j=1}^{\epsilon_k} D_{k_1}\cdot \ldots \cdot \widehat{D_{k_j}}\cdot \ldots \cdot D_{k_{\epsilon_k}}\Big), \\    d_1^3(\gamma_i\gamma_j\gamma_k)=0,
		\end{gather*} where, for the last three equalities, recall again that on $\Sym^p H^1(C)$ for $p\geq 2$, the alternating sum of face maps is, by definition, $0$. Therefore the $E_1^{-2,*}$ terms defined by $\mathrm{Ker}(d^2_1)/ \mathrm{Coker}(d^3_1)$ is given by: 
		\begin{align*}
			\bigoplus_{1\leq i\leq 2g}\oplus_{1\leq j,l\leq t}{H^*({J(C)^{\times (n-r)};\Q(-(\epsilon_j-1)-(\epsilon_l-1))})[D_1,\ldots, D_n]\over I}\cdot\\ \big(e\otimes \gamma_i  - 1\otimes \gamma_i h +\mathbb{1}\otimes e \overline{\gamma_i}\big)[2(\epsilon_j-1)+2(\epsilon_l-1)] \\ \bigoplus_{1\leq i,j\leq 2g}\oplus_{1\leq j,l\leq t}{H^*({J(C)^{\times (n-r)};\Q(-(\epsilon_j-1)-(\epsilon_l-1))})[D_1,\ldots, D_n]\over I}\cdot\\(\gamma_i\gamma_j)[2(\epsilon_j-1)+2(\epsilon_l-1)] .
		\end{align*} where $I$ is the ideal as defined above.
		The formula for the differentials in the case of $p\geq 3$ mimics that of $p=3$, and we have:
		
		\begin{gather*}
			\mathbb{1}\otimes e\otimes \gamma_1\ldots\gamma_{p-2}\mapsto\\  \sum_{k=1}^{t}\Big(\sum_{j=1}^{\epsilon_k} (e\otimes \gamma_1\ldots\gamma_{p-2}-\1\otimes  \gamma_1\ldots\gamma_{p-2} D_{k_j}) D_{k_1}\cdot \ldots \cdot \widehat{D_{k_j}}\cdot \ldots \cdot D_{k_{\epsilon_k}}\Big),\\
			e\otimes \gamma_1\ldots\gamma_{p-1} \mapsto  \gamma_1\ldots\gamma_{p-1}  \sum_{k=1}^{t} D_{k_1}\cdot\ldots \cdot D_{k_{\epsilon_k}},\\  \mathbb{1} \otimes  \gamma_1\ldots\gamma_{p-1}\mapsto \gamma_1\ldots\gamma_{p-1}   \sum_{k=1}^{t}\Big(\sum_{j=1}^{\epsilon_k} D_{k_1}\cdot \ldots \cdot \widehat{D_{k_j}}\cdot \ldots \cdot D_{k_{\epsilon_k}}\Big)\\  \gamma_1\ldots\gamma_{p}\mapsto 0
		\end{gather*}
	and
 \begin{gather*}
			\mathrm{Ker}(d_1^p)/\mathrm{Coker} (d_1^{p+1}) =\\ \Bigg(\bigoplus_{1\leq i\leq 2g}\oplus_{1\leq j_0,\ldots, j_p\leq t}{H^*({J(C)^{\times{(n-r)}};\Q(-\sum_{k=0}^{p}(\epsilon_{j_k}-1))})[D_1,\ldots,D_n]\over I}\cdot\\ \big(e\otimes  \gamma_1\ldots\gamma_{p-1}  - \mathbb{1} \otimes \gamma_1\ldots\gamma_{p-1}  D_j \big)[2\sum_{k=0}^{p}(\epsilon_{j_k}-1)]\Bigg) \\\bigoplus\\ \oplus_{1\leq j_0,\ldots, j_p\leq t}{H^*({J(C)^{\times{(n-r)}};\Q(-\sum_{k=0}^{p}(\epsilon_{j_k}-1))})[D_1,\ldots,D_n]\over I}(\gamma_1\ldots\gamma_{p})[2\sum_{k=0}^{p}(\epsilon_{j_k}-1)].
		\end{gather*}where $I$ is as defined before.
		
	\end{enumerate}
	
	Now we are left with analysing the resulting $E_2$ page. That the differentials on the $E_2$ page vanish for $p\leq \min d_i-2g$ follow simply from weight considerations- the space $\Tc_{\bullet}$ consists of varieties whose $n^{th}$ cohomology is pure of weight $n$. Now observe the following: we have an equality \begin{gather*}
		\mathrm{R}\Gamma_c( \P_{ {\mathrm{Pic}}^{\vec{d}}_{\Sigma} C }(\vartheta^*\nu_* P(\vec{d}),\Sigma^{\vec{d}})),C^{\bullet}(\underline{\Ql}_{\P_{{\mathrm{Pic}}^n C} (\oplus_i \nu_* P(n)^{\otimes \lambda_i},\vec{d})})) =\\ \mathrm{R}\Gamma_c (\P_{ {\mathrm{Pic}}^{\vec{d}}_{\Sigma} C }(\vartheta^*\nu_* P(\vec{d}),\Sigma^{\vec{d}}), j_{!}\underline{\Ql}_{\mathrm{Mor}_n (C,\P(\vec{d}))}) 
	\end{gather*}in the derived category of constructible sheaves over $\P_{ {\mathrm{Pic}}^{\vec{d}}_{\Sigma} C }(\vartheta^*\nu_* P(\vec{d}),\Sigma^{\vec{d}})$ where $C^{\bullet}(\Q_{\P_{ {\mathrm{Pic}}^{\vec{d}}_{\Sigma} C }(\vartheta^*\nu_* P(\vec{d}),\Sigma^{\vec{d}})})$ denotes the complex of $\ell$-adic sheaves: \begin{align}
		0\to j_{!}j^*\underline{\Ql}_{\Tc_{-1}}\to \underline{\Ql}_{\Tc_{-1}}\to \pi_{0_*}\pi_0^*\underline{\Ql}_{\Tc_{-1}} \to (\pi_{1_*}\pi_1^*\underline{\Ql}_{\Tc_{-1}}\otimes \mathit{sgn}_{2})^{{S}_{2}} \cdots \to \nonumber \\ \cdots \to  (\pi_{p_*}\pi_p^* \underline{\Ql}_{\Tc_{-1}}\otimes \mathit{sgn}_{p+1})^{{S}_{p+1}}\to \cdots 
	\end{align} 
	on the other hand, for any $m\in \mathbb{N}$ we have \begin{gather*}
		\mathrm{R}^i\Gamma_c(\P_{ {\mathrm{Pic}}^{\vec{d}}_{\Sigma} C }(\vartheta^*\nu_* P(\vec{d}),\Sigma^{\vec{d}}), C^{\bullet}(\underline{\Ql}_{\P_{{\mathrm{Pic}}^n C} (\oplus_i \nu_* P(n)^{\otimes \lambda_i},\vec{d})}))\cong \\ \mathrm{R}^i\Gamma_c(\P_{ {\mathrm{Pic}}^{\vec{d}}_{\Sigma} C }(\vartheta^*\nu_* P(\vec{d}),\Sigma^{\vec{d}})), C^{\bullet}(\underline{\Ql}_{\P_{ {\mathrm{Pic}}^{\vec{d}}_{\Sigma} C }(\vartheta^*\nu_* P(\vec{d}),\Sigma^{\vec{d}})})/\tau_{\geq m} C^{\bullet}(\underline{\Ql}_{\P_{ {\mathrm{Pic}}^{\vec{d}}_{\Sigma} C }(\vartheta^*\nu_* P(\vec{d}),\Sigma^{\vec{d}})})
	\end{gather*} for all $i\geq 2(m+1)-2\max_j (\epsilon_j-1)$, where $\tau_{\geq m} C^{\bullet}(\Q_{\P_{ {\mathrm{Pic}}^{\vec{d}}_{\Sigma} C }(\vartheta^*\nu_* P(\vec{d}),\Sigma^{\vec{d}})})$ denotes the truncated complex up to the $(\max_j (\epsilon_j-1)-1)$ term and this is because $$\tau_{\geq m} C^{\bullet}(\Q_{\P_{ {\mathrm{Pic}}^{\vec{d}}_{\Sigma} C }(\vartheta^*\nu_* P(\vec{d}),\Sigma^{\vec{d}})})$$ is supported on complex codimension $m$ in $\P_{ {\mathrm{Pic}}^{\vec{d}}_{\Sigma} C }(\vartheta^*\nu_* P(\vec{d}),\Sigma^{\vec{d}})$. Therefore the cohomology of $\mathrm{Mor}_n (C,\P(\vec{d}))$ up to degree $\min d_i-2g$ is solely dictated by the $E_2$ page.
	
	To this end, for each primitive collection $E_j$, let $$z_j:= (e-D_j)$$ which has degree $(-1, 2\epsilon_j)$ and let $$\alpha^j_{i}:= \gamma^j_{i}-\overline{\gamma^j_{i}} \text{ that corresponds to } E_j\,\,\,\,\, i=1,\ldots, 2g$$ which has degree $(-1,2\epsilon_j-1)$. Clearly for $3\leq p\leq n-2g$, the elements $$z_j\alpha^{j_0}_{i_0}\ldots \alpha^{j_p}_{i_p}$$ which are of degree $(-(p+1), 2\epsilon_j+2+\sum_k\epsilon_{j_k})$, when expanded, gives us:  \begin{flalign*}
	z_j\alpha^{j_0}_{i_0}\ldots \alpha^{j_p}_{i_p}\\ &= (e-D_j)(\gamma_{i_1}-\overline{\gamma_{i_1}})\ldots (\gamma_{i_p}-\overline{\gamma_{i_p}}) \\&= (e-D_j)\prod_{j=0}^{p}\gamma_{i_j} +\Big\{ \text{lower order terms as a polynomial on } \gamma_{i_0},\ldots, \gamma_{i_p}\Big\} \\ & = (e-D_j)\prod_{k=0}^{p}\gamma_{i_k}
	\end{flalign*} because the lower order terms are all $0$ in $$\big(H^2(C)\oplus H^0(C)\big)\bigotimes \Sym^p H^1(C)\otimes H^*({\mathrm{Pic}}^{n-(p+1)}C)[h]/h^N,$$ thanks to the alternating action of ${S}_{p+1}$. Whereas  $\alpha^{j_0}_{i_0}\ldots \alpha^{j_p}_{i_p}$, which is of degree $(-(p+1), \sum_k\epsilon_{j_k}$,  when expanded, gives us
	\begin{flalign*}
	\sum_k\epsilon_{j_k}\\ &= (\gamma_{i_0}-\overline{\gamma_{i_0}})\ldots (\gamma_{i_{p}}-\overline{\gamma_{i_{p}}}) \\&= \prod_{j=0}^{p}\gamma_{i_j} +\Big\{ \text{lower order terms as a polynomial on } c_{i_1},\ldots, c_{i_{p+1}}\Big\} \\ & = \prod_{j=1}^{p+1}c_{i_j}
	\end{flalign*}  because again, the lower order terms are all $0$ for the exact same reason cited above.
	
	Now as for $p=2$,  we have \begin{align*}
		z_l\alpha^j_i = (e-D_l)(\gamma_i-\overline{\gamma_i}) = e\gamma_i- \gamma_iD_l+e\overline{\gamma_i}+ D_l\overline{\gamma_i} \\ =e\gamma_i- \gamma_i D_l+e\overline{\gamma_i}
	\end{align*} because the alternating action of ${S_2}$ kills $H^0(C^2)\otimes H^*(\P_{{\mathrm{Pic}}^n C} (\oplus_i \nu_* P(n)^{\otimes \lambda_i},\vec{d}))$, and in turn, $h\overline{\gamma_i}$.
	This give us the algebra structure on the $E_2$ page for $p\leq \min{d_i}-2g$ and thus completes the proof of Theorem \ref{thm:CPSig}.
	
\end{proof}

\section*{Acknowledgements}
I am indebted to Martin Ulirsch for helpful discussions and for bringing this question to my attention. I am also grateful to David Cox and William Fulton for their generous response to my emails.  And my warm thanks to Benson Farb for his helpful feedback and suggestions. This material is based upon work supported by the National Science Foundation under Grant No. DMS-1926686.

\end{document}